\numberwithin{equation}{section}
\newtheorem{theor}{Theorem}[section]
\newtheorem{propo}[theor]{Proposition}
\newtheorem{lemma}[theor]{Lemma}
\newtheorem*{rem*}{Remark}
\newcommand{\N}{\mathbb{N}^d}
\newcommand{\R}{\mathbb{R}^d}
\newcommand{\op}{\star} 
\newcommand{\lag}{\widehat{L}}
\begin{document}

\footnotetext{
\noindent \emph{2010 Mathematics Subject Classification:} Primary 42C10; Secondary 47D03, 47G10.\\
\emph{Key words and phrases:} Laguerre semigroup, contractivity, symmetric diffusion semigroup,
	Bessel semigroup. \\
Research of both authors supported by MNiSW Grant N N201 417839.
}

\title[Contractivity of Laguerre semigroups]
	{On $L^p$-contractivity of Laguerre semigroups}


\author[A. Nowak]{Adam Nowak}
\address{Adam Nowak, \newline
			Instytut Matematyczny,
      Polska Akademia Nauk, \newline
      \'Sniadeckich 8,
      00--956 Warszawa, Poland \newline
			\indent and \newline
			Instytut Matematyki i Informatyki,
      Politechnika Wroc\l{}awska,       \newline
      Wyb{.} Wyspia\'nskiego 27,
      50--370 Wroc\l{}aw, Poland      
      }
\email{adam.nowak@pwr.wroc.pl}

\author[K. Stempak]{Krzysztof Stempak}
\address{Krzysztof Stempak,     \newline
      Instytut Matematyki i Informatyki,
      Politechnika Wroc\l{}awska,       \newline
      Wyb{.} Wyspia\'nskiego 27,
      50--370 Wroc\l{}aw, Poland      }  
\email{krzysztof.stempak@pwr.wroc.pl}

\begin{abstract}
We study several Laguerre semigroups appearing in the literature and find sharp ranges of type
parameters for which these semigroups are contractive on all $L^p$ spaces, $1\le p \le \infty$.
We also answer a similar question for Bessel semigroups, 
which in a sense are closely related to the Laguerre semigroups.
\end{abstract}

\maketitle

\section{Introduction} \label{sec:intro}
Let $(X,\mu)$ be a $\sigma$-finite measure space  and assume that $\mathcal D$ is a
linear subspace  of the space $\mathcal M$  of all measurable functions on $X$, such that it contains all
$L^p=L^p(X,\mu)$ spaces,  $1\le p\le\infty$. If $\mu$ is finite then one can simply take $\mathcal D=L^1$;
otherwise the natural choice is $\mathcal{D} = L^1 + L^{\infty}$.
A \textit{symmetric diffusion semigroup}, see \cite[Chapter III]{St}, is a family of linear operators
$\{T_t\}_{t\ge 0}$, $T_0=\textrm{Id}$,  mapping jointly  $\mathcal D$  into $\mathcal M$,  and satisfying: 
\begin{itemize}
\item[(i)] for each $1\le p\le\infty$, 
every $T_t$ is a contraction on $L^p$ and $\{T_t\}_{t\ge 0}$ is a semigroup there,
\item[(ii)] each $T_t$ is a self-adjoint operator in $L^2$, 
\item[(iii)] 	for each $f\in L^2$, $\lim_{t\to0^+}T_tf=f$ in $L^2$.
\end{itemize}
A symmetric diffusion semigroup $\{T_t\}_{t\ge 0}$ is called \emph{Markovian}, 
if it is positive and conservative, that is satisfies in addition
\begin{itemize}
\item[(iv)] for each $t$, $T_tf\ge0$ if $f\ge0$, 
\item[(v)] $T_t\textbf{1}=\textbf{1}$ for all $t$, 
\end{itemize}
respectively.
Semigroups satisfying (i)-(iv) and 
\begin{itemize}
\item[(v')] $T_t\textbf{1}\le\textbf{1}$ for all $t$
\end{itemize}
replacing (v), are called \emph{submarkovian}.

Symmetric diffusion semigroups commonly emerge from unbounded self-adjoint operators
as their infinitesimal generators. Suppose $L$ is a positive self-adjoint operator on $L^2=L^2(X,\mu)$. 
The semigroup of operators $\{\exp(-tL)\}_{t\ge 0}$, defined on $L^2$ by means of the spectral theorem, 
is said to be an \textit{$L^p$-contractive semigroup}, see \cite[Chapter X.8, p.\,255]{RS}, 
if for each $ 1\le p\le\infty$ and every $t\ge 0$ one has
$$
\|\exp(-tL)f\|_p\le\|f\|_p, \qquad f\in L^2\cap L^p(X,\mu).
$$

In concrete realizations, it frequently happens that $\exp(-tL)$, $t>0$, are integral operators,
\begin{equation} \label{ker}
\exp(-tL)f(x)=\int_X K_t(x,y)f(y)\,d\mu(y), \qquad f\in L^2, \quad x\in X,
\end{equation}
and the right-hand side of \eqref{ker} makes sense
for a larger than $L^2$ class of functions, that usually includes all $L^p$ spaces, $1\le p\le\infty$, 
and defines a family of operators $\{T_t\}_{t>0}$ which are bounded on $L^p(X,\mu)$, $1\le p\le\infty$.  
Thus $T_t$, $t>0$, are the unique linear extensions of $\exp(-tL)$ to $L^p$, $1\le p < \infty$, 
and $\{T_t\}_{t> 0}$ augmented by $T_0=\textrm{Id}$ typically occurs to be a
symmetric diffusion semigroup. $L^p$-contractivity is a decisive property in several instances, 
see for example \cite[Chapter III]{St} and \cite{G}. In particular, it is absolutely 
essential for an application of Stein's celebrated maximal theorem \cite[p.\,73]{St}.

The aim of this paper is a thorough  study of the Laguerre semigroups appearing in the literature
from the $L^p$-contractivity property point of view. The detailed study of mapping properties of  
heat-diffusion semigroups associated with various systems of Laguerre functions 
was initiated in \cite{Ste} in the one-dimensional setting and then continued in \cite{N} 
in the multi-dimensional weighted setting. In this article we investigate semigroups associated 
with four different Laguerre systems in the  multi-dimensional setting. 
Let $L_k^{\alpha}$ denote the Laguerre polynomial of degree $k \in \mathbb{N}=\{0,1,2,\ldots\}$
and order $\alpha > -1$, cf. \cite[p.\,76]{Leb}. Starting with the one-dimensional situation, 
for any given $\alpha>-1$ we consider the following systems on $\mathbb{R}_+=(0,\infty)$:
\begin{itemize}
\item
	normalized Laguerre \emph{polynomial} system, $\{{\lag}_k^{\alpha} : k\in \mathbb{N}\}$,
	$$
		{\lag}_{k}^{\alpha}(x) =\left(\frac{\Gamma (k+1)}{\Gamma(k+\alpha +1)}\right)
		^{1 \slash 2} L_{k}^{\alpha}(x), \qquad x > 0,
	$$
	which is an  orthonormal basis in $L^2(\mathbb R_+,\, x^{\alpha}e^{-x}\, dx)$;
\item \emph{standard} Laguerre function system, $\{\mathcal{L}_{k}^{\alpha} : k\in\mathbb N\}$, 
  $$
		\mathcal{L}_{k}^{\alpha}(x) =
		{\lag}_k^{\alpha}(x) x^{\alpha\slash 2}e^{-x\slash 2}, \qquad x > 0,
	$$	
  which is an  orthonormal basis in $L^2(\mathbb R_+,\, dx)$;
\item Laguerre function system of \emph{Hermite type}, $\{\varphi_{k}^{\alpha} : k\in\mathbb N\}$, 
	$$
		\varphi_{k}^{\alpha}(x)=
		\sqrt{2} {\lag}_k^{\alpha}(x^2) x^{\alpha+1\slash 2}e^{-x^2\slash 2}, \qquad x > 0,
	$$	
	which is  an  orthonormal basis in $L^2(\mathbb R_+,\, dx)$;	
\item Laguerre function system of \emph{convolution type}, $\{\ell_{k}^{\alpha} : k\in\mathbb N\}$, 
	$$
		\ell_{k}^{\alpha}(x)=
		\sqrt{2} {\lag}_k^{\alpha}(x^2) e^{-x^2\slash 2}, \qquad x > 0,
	$$		
	which is  an  orthonormal basis in $L^2(\mathbb R_+,\, x^{2\alpha+1}\,dx)$.
\end{itemize}
The corresponding multi-dimensional systems are then formed simply by taking tensor products. 
Thus for a multi-index $\alpha = (\alpha _1, \ldots , \alpha _d) \in (-1,\infty)^{d}$, 
the system $\{{\lag}_k^{\alpha} : k \in \mathbb{N}^d\}$ is an orthonormal basis in $L^2(\R_+,\,m_{\alpha})$,
the systems $\{\mathcal L_{k}^{\alpha} : k \in \N \}$ and $\{ \varphi _{k}^{\alpha} : k \in \N \}$ are 
orthonormal bases in $L^2(\R_{+})$, and  $\{\ell_{k}^{\alpha} : k \in \N \}$ is an orthonormal basis in 
$L^2(\R_{+},\,\mu_\alpha)$, where 
$$
dm_{\alpha}(x)=\Big(\prod_{i=1}^d x_i^{\alpha_i}e^{-x_i}\Big)dx, \qquad
d\mu_\alpha(x)=\Big(\prod_{i=1}^d x_i^{2\alpha_i+1}\Big)dx.
$$

For these four systems, there are naturally associated differential operators
$L_{\alpha}^{{\lag}}$, $L_\alpha^{\mathcal L}$, $L_\alpha^{\varphi}$ and $L_\alpha^{\ell}$, 
for which $\{{\lag}_k^{\alpha}\}$, $\{\mathcal L_{k}^{\alpha}\}$, $\{ \varphi _{k}^{\alpha}\}$ 
and $\{\ell_{k}^{\alpha} \}$, respectively, are systems of eigenfunctions; see Sections 
\ref{sec:pol}-\ref{sec:lag_conv} for the definitions. 
We investigate the semigroups $\{T_t^{\alpha,\op}\}$,
$$
T_t^{\alpha,\op} = \exp(-t L_{\alpha}^\op), \qquad \op = {{\lag}},\; {\mathcal L},\; {\varphi}, \; {\ell},
$$
generated by 
these Laguerre differential operators (or rather by their natural self-adjoint extensions).  
We also study so-called \emph{modified} Laguerre semigroups
$$
\{\widetilde{T}_t^{\alpha,\op,j}\}, \qquad j=1,\ldots,d, \qquad \op = 
	{{\lag}},\; {\mathcal L},\; {\varphi}, \; {\ell},
$$
which emerge naturally in the theory of conjugacy connected with Laguerre expansions. They
are generated by self-adjoint extensions of suitable modifications of the Laguerre operators in
question, see \cite{NS1} for a general background, and are important tools when studying conjugacy
problems. Various properties of the modified Laguerre semigroups can be found in \cite{No2,NS,NS2}.
All the considered Laguerre semigroups possess integral representations, and the corresponding
integral kernels are known explicitly. Since all the kernels are strictly positive, 
it follows that all the semigroups satisfy Condition (iv). Obviously, they also satisfy (ii) and (iii),
by the very definition. However, determining whether Conditions (i) and (v) or (v') are satisfied or not,
requires a more subtle treatment. In this paper we find the answers in all the cases.

Our main results are summarized in the following table, which provides
sharp ranges of the type parameter $\alpha$ for which the Laguerre semigroups are $L^p$-contractive. 
Noteworthy, these are also optimal ranges of $\alpha$ for the assumptions of Stein's maximal
theorem \cite[p.\,73]{St} to be satisfied.

{{
\begin{table*}[htbp]
\centering
\begin{tabular}{c|c|c|c|c|}
\cline{2-5}
 & $\op={{\lag}}$ 
 & $\op={\mathcal{L}}$ 
 & $\op={\varphi}$ 
 & $\op={\ell}$ \\ 
\cline{1-5} 
   \multicolumn{1}{|c|}{$T^{\alpha,\op}_t$} 
   & $\alpha \in (-1,\infty)^d$ 
   & $\alpha \in [0,\infty)^d$ 
   & $\alpha \in \big(\{-1\slash 2\}\cup[1\slash 2,\infty)\big)^d$ 
   & $\alpha \in (-1,\infty)^d$ \\ 
\cline{1-5}   
 \multicolumn{1}{|c|}{$\widetilde{T}^{\alpha,\op,j}_t$} 
 & $\alpha_j \in [-1\slash 2,\infty)$, 
 & $\alpha_j \in [-1,\infty)$, 
 & $\alpha_j \in \{-3\slash 2\}\cup[-1\slash 2,\infty)$, 
 & $\alpha_j \in [-1\slash 2,\infty)$,     \\ [-5pt]
 \multicolumn{1}{|c|}{$j=1,\ldots,d$}
 & $\alpha_i > -1 
 		, \; i \neq j$ 
 & $\alpha_i \ge 0 
 		, \; i \neq j$ 
 & $\alpha_i \in \{-1\slash 2\}\cup[1\slash 2,\infty), \; i \neq j$ 
 & $\alpha_i > -1 
 		, \; i \neq j$     \\ 
\cline{1-5}
\noalign{\bigskip}
\end{tabular}
	\caption{Optimal ranges of $\alpha$ for $L^p$-contractivity}
	\label{tab:Optimal}
\end{table*}
}}

In the cases of Laguerre polynomial system and Laguerre function system of convolution type the 
situation is perfect, as far as the non-modified semigroups are considered: for any $\alpha \in 
(-1,\infty)^{d}$, $\{T_t^{\alpha,{{\lag}}}\}$ and $\{T_t^{\alpha,{\ell}}\}$
are $L^p$-contractive. In the cases of two other 
systems, to ensure the action of the semigroups on all the corresponding $L^p$ spaces, $1\le p\le\infty$, 
one has to restrict the set of parameters $\alpha$, namely $\alpha \in [0,\infty)^{d}$ for 
$\{T_t^{\alpha,{\mathcal L}}\}$, and $\alpha \in [-1/2,\infty)^{d}$ for $\{T_t^{\alpha,{\varphi}}\}$. 
In the case of standard Laguerre function system, in the restricted range $\alpha  \in [0,\infty)^{d}$ the 
situation is again typical: $\{T_t^{\alpha,{\mathcal L}}\}$ is $L^p$-contractive.
A bit surprisingly, this is not the case of the  Laguerre function system of Hermite type. More precisely, 
$L^p$-contractivity holds for $\alpha  \in (\{-1/2\}\cup[1/2,\infty))^{d}$, but fails to hold if 
at least one of the coordinates of $\alpha$ falls into the interval $(-1/2,1/2)$. 
On the other hand, this lack of contractivity is consistent with the 
restriction on $\alpha$ frequently occurring in various results in the context
of the Laguerre system of Hermite type; see \cite[Theorem 3.3]{NS} as a typical example.
Finally, it is remarkable that the ranges of $\alpha$ for $L^p$-contractivity of the modified 
Laguerre semigroups do not coincide with those for the original Laguerre semigroups. 
This, however, is consistent with certain results in the underlying
conjugacy theory, which were proved after restricting to those $\alpha$, for which both the original and the
modified Laguerre semigroups are contractive; see for instance \cite{No2}. 

The paper is organized as follows. In Sections \ref{sec:pol}-\ref{sec:lag_conv} we analyse 
the Laguerre semigroups associated with the systems
$\{{\lag}_k^{\alpha}\}$, $\{\mathcal{L}_k^{\alpha}\}$, $\{\varphi_k^{\alpha}\}$ and 
$\{\ell_k^{\alpha}\}$, respectively. Section \ref{sec:comm} contains comments on $L^p$-contractivity
of certain Bessel semigroups, which in a sense are closely related to the Laguerre semigroups.

We shall use the following notation. By $\textbf{1}$ we always denote the function identically equal 1 on 
its domain. Given a bounded linear operator $A$ on $L^p(X,\mu)$, $1\le p\le\infty$, we shall write 
$\|A\|_{p\to p}$ for its operator norm.

\section{Laguerre polynomial semigroup} \label{sec:pol}

Recall that the system $\{{\lag}_{k}^{\alpha} : k \in \N \}$ of multi-dimensional normalized 
Laguerre polynomials is an orthonormal basis in $L^2(\R_{+}, m_\alpha)$. 
It consists of eigenfunctions of the differential operator
\begin{equation*}
 L^{{\lag}}_{\alpha}=
- \sum _{i=1}^{d}\left( x_i\frac{\partial^2}{\partial x_i^2}+ (\alpha_i+1-x_i)
\frac{\partial}{\partial x_i}\right);
\end{equation*}
we have $L_\alpha {\lag}_{k}^{\alpha}=|k|{\lag}_{k}^{\alpha}$, where $|k|=k_1+\ldots+k_d$ is the 
length of $k$. The operator $L^{{\lag}}_\alpha$ is formally symmetric and positive in 
$L^2(\R_{+}, m_\alpha)$, and admits a natural self-adjoint extension in $L^2(\R_{+}, m_\alpha)$ 
whose spectral decomposition is given by the Laguerre polynomials 
(we use the same symbol $L^{{\lag}}_\alpha$  to denote this extension).
The corresponding heat semigroup $\{T_t^{\alpha,{{\lag}}}\} = \{\exp(-tL^{{\lag}}_\alpha)\}$
is defined by means of the spectral theorem,  
\begin{equation} \label{spec}
T_t^{\alpha,{{\lag}}}f=\sum_{n=0}^{\infty} e^{-tn} \sum_{|k|=n}
\langle f,{\lag}_k^\alpha \rangle_{m_\alpha} {\lag}_k^\alpha, \qquad f\in L^2(\R_+,\, m_\alpha),
\end{equation}
and it has the integral representation
\begin{equation} \label{int_pol} 
  T_t^{\alpha,{{\lag}}}f(x)
  =\int_{\R_{+}} G_t^{\alpha,{\lag}}(x,y)f(y)\,dm_\alpha (y),  \qquad x\in \R_{+}, 
\end{equation}
where the heat kernel is given by
\begin{equation*}
G^{\alpha,{\lag}}_t(x,y)=\sum_{n=0}^\infty e^{-tn} 
\sum_{|k|=n} {\lag}_k^\alpha(x){\lag}_k^\alpha(y),\qquad x,y\in \R_{+}.
\end{equation*}

This oscillating series can be summed by means of
the Hille-Hardy formula \cite[(4.17.6)]{Leb}, and the result is
\begin{equation*}
G^{\alpha,{\lag}}_t(x,y)= \frac{e^{t(|\alpha|+d)/2}}{(2\sinh (t/2))^{d}}\exp\bigg(
{-\frac{e^{-t\slash 2}}{2\sinh(t\slash 2)}\sum_{i=1}^d (x_i+y_i)}\bigg)
\prod^{d}_{i=1}\,(x_i y_i)^{-\alpha_i/2} I_{\alpha_i}\left(\frac{ \sqrt{x_i y_i}}{\sinh (t/2)}\right).
\end{equation*}
Here $|\alpha|=\alpha_1+\ldots+\alpha_d$ (notice that this quantity may be negative) and
$I_\nu$ denotes the modified Bessel function of the first kind and order $\nu$, 
cf. \cite[Chapter 5]{Leb}; considered on $\mathbb R_+$, it is real positive and smooth for any $\nu > -1$. 
The semigroup property of the kernel $G_t^{\alpha,\widehat L}(x,y)$ is reflected in the identity 
\begin{equation}\label{Ch-K}
\int_0^\infty G_t^{\alpha,\widehat L}(x,z)G_s^{\alpha,\widehat L}(z,y)\,dm_\alpha(z)=
G_{t+s}^{\alpha,\widehat L}(x,y), \qquad x,y\in\mathbb R^d_+,
\end{equation}
which can be independently verified by means of the formula (cf. \cite[Formula 2.15.20 (8)]{PBM})
$$
\int_0^\infty I_\nu(az)I_\nu(bz)\exp(-pz^2)\, zdz=\frac1{2p}\exp\Big(\frac{a^2+b^2}{4p}\Big)
I_{\nu}\Big(\frac{ab}{2p}\Big), \qquad a,b,p>0, \quad\nu>-1.
$$
Since ${\lag}^{\alpha}_{(0,\ldots,0)}\equiv \textrm{const.}$, 
we see from \eqref{spec} that $T_t^{\alpha,{{\lag}}}\textbf{1}=\textbf{1}$; in particular, 
\begin{equation*}
\int_0^\infty G_t^{\alpha,\widehat L}(x,y)\,dm_\alpha(y)=1,\qquad  x\in \mathbb R^d_+.
\end{equation*}
This together with \eqref{Ch-K} shows that the kernel $G_t^{\alpha,\widehat L}(x,y)$ defines
a \textit{Markov semigroup}, see \cite{B}; \eqref{Ch-K} is then called the Chapman-Kolmogorov identity.

The action of $T_t^{\alpha,{{\lag}}}$ can be extended by \eqref{int_pol} to 
$L^1(\R_+,\, m_\alpha)$ (that includes all $L^p(\R_+,\, m_\alpha)$, $1\le p\le\infty$). 
Indeed, the integral in \eqref{int_pol} converges for every $f \in L^1(\R_+,\, m_\alpha)$ and every
$x \in \R_+$, as can be seen by applying the standard asymptotics for $I_{\nu}$,
cf. \cite[(5.16.4),\,(5.16.5)]{Leb},
\begin{equation} \label{bes}
I_\nu(z)\simeq z^\nu,\quad z\to 0^+;\qquad I_\nu(z)\simeq z^{-1/2}e^z, \quad z\to \infty.
\end{equation}
The fact that $T_t^{\alpha,{{\lag}}}\textbf{1}=\textbf{1}$ and the 
positivity of $T_t^{\alpha,{{\lag}}}$ give 
$\|T_t^{\alpha,{{\lag}}}\|_{\infty\to\infty}=1$. Then by the symmetry we also have 
$\|T_t^{\alpha,{{\lag}}}\|_{1\to 1}=1$, and by interpolation we conclude that,
for any $1\le p \le \infty$,
$$
\|T_t^{\alpha,{{\lag}}}\|_{p\to p}\le 1, \qquad t>0.
$$
The same conclusion follows from a simple lemma, which we formulate below for further reference.
This result can be regarded as a special case of the Schur test. In particular, it shows that 
a semigroup $\{T_t\}$ defined on $L^1+L^{\infty}$ by means of a nonnegative symmetric kernel is
$L^p$-contractive if and only if $T_t\textbf{1} \le \textbf{1}$ for all $t$.
\begin{lemma}\label{Schur}
Let $K(x,y)$ be a nonnegative symmetric kernel on $(X,\mu)$ that satisfies 
$$
\int_X K(x,y)\,d\mu(y)\le B, \qquad x\in X.
$$
Then, for each $1\le p\le\infty$, the integral operator $Tf(x)=\int_X K(x,y)f(y)\,d\mu(y)$ 
is bounded on $L^p(X,\mu)$ and $\|T\|_{p\to p}\le B$.
\end{lemma}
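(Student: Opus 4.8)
The plan is to use the classical Schur test with the symmetry hypothesis to handle the two extreme exponents $p=1$ and $p=\infty$ directly, and then to fill in the intermediate range by interpolation. Concretely, I would first treat $p=\infty$: for $f\in L^\infty$ one has pointwise $|Tf(x)|\le\int_X K(x,y)|f(y)|\,d\mu(y)\le\|f\|_\infty\int_X K(x,y)\,d\mu(y)\le B\|f\|_\infty$, using nonnegativity of $K$ and the stated row-sum bound. This gives $\|T\|_{\infty\to\infty}\le B$ without any effort.

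Next I would treat $p=1$. Here the symmetry $K(x,y)=K(y,x)$ is exactly what converts the hypothesized bound on $\int_X K(x,y)\,d\mu(y)$ (a bound on ``row sums'') into the same bound on the ``column sums'' $\int_X K(x,y)\,d\mu(x)$. Then for $f\in L^1$, Tonelli's theorem lets me interchange the order of integration:
\begin{equation*}
\|Tf\|_1\le\int_X\Big(\int_X K(x,y)|f(y)|\,d\mu(y)\Big)\,d\mu(x)
=\int_X|f(y)|\Big(\int_X K(x,y)\,d\mu(x)\Big)\,d\mu(y)\le B\|f\|_1,
\end{equation*}
so $\|T\|_{1\to1}\le B$. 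The only point requiring a little care is that the interchange of integration is legitimate because the integrand is nonnegative, which is guaranteed by $K\ge0$; this is where Tonelli (rather than Fubini) is the right tool.

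For the intermediate exponents $1<p<\infty$, I would invoke the Riesz--Thoren interpolation theorem: having shown $T$ is bounded on $L^1$ and on $L^\infty$ both with norm at most $B$, interpolation gives $\|T\|_{p\to p}\le B^{1-\theta}B^{\theta}=B$ for the intermediate $p=(1-\theta)/1+\theta/\infty$, i.e. $p=1/(1-\theta)$. Alternatively, one can give a self-contained proof of the $L^p$ bound by a single application of Hölder's inequality in the form of the symmetric Schur test, splitting $K=K^{1/p'}\cdot K^{1/p}$ and estimating the two factors against the row and column sums respectively; this avoids citing interpolation and makes the symmetry assumption do all the work uniformly in $p$. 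Either route is routine.

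I do not anticipate a genuine obstacle here, since the statement is essentially a textbook Schur test specialized to symmetric kernels with a common bound $B$; the only substantive input is the symmetry, which is what permits a single hypothesis on row sums to control both $L^1$ and $L^\infty$ (and hence all $L^p$) simultaneously. If anything merits attention, it is the borderline cases $p=1$ and $p=\infty$ and the justification of the integral manipulations, but nonnegativity of $K$ dispatches these cleanly.
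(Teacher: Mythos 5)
Your proof is correct, and it follows essentially the same route the paper takes: the paper states the lemma without a formal proof, but the paragraph immediately preceding it carries out exactly your argument (the $L^\infty$ bound from the row-sum hypothesis, the $L^1$ bound via symmetry, and interpolation for intermediate $p$) in the special case $B=1$. Your alternative self-contained argument via H\"older's inequality with the splitting $K=K^{1/p'}\cdot K^{1/p}$ is also valid, but it is not needed to match the paper's reasoning.
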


Summarizing, for each $\alpha \in (-1,\infty)^d$, the semigroup $\{T_t^{\alpha,{{\lag}}}\}$
is a symmetric diffusion semigroup, which is Markovian. 
Let us mention that $\{T_t^{\alpha,{{\lag}}}\}$ is also \textit{hypercontractive}, which means some
smoothing properties of the semigroup, much more subtle than contractivity; see \cite{KS,K}.

The modified semigroups $\{\widetilde{T}_t^{\alpha,{{\lag}},j}\}$, $j=1,\ldots,d$, are generated
in $L^2(\R_+,\, m_\alpha)$ by proper self-adjoint extensions of the operators
$L_{\alpha}^{{\lag}} + (\alpha_j+1\slash 2+x_j)\slash (2x_j)$, see \cite[Section 2]{No2}.
Their integral representation, valid for $f \in L^1(\R_+,\, m_\alpha)$, is
$$
\widetilde{T}_t^{\alpha,{{\lag}},j}f(x) = e^{-t}\int_{\R_+}\sqrt{x_j y_j}
	G_t^{\alpha+e_j,{\lag}}(x,y) f(y)\, dm_{\alpha}(y), \qquad x \in \R_+, \quad t>0,
$$
where $e_j$ is the $j$th coordinate vector in $\R$. Note that in view of \eqref{bes}
the sharp range of $\alpha$'s for which $\{\widetilde{T}_t^{\alpha,{{\lag}},j}\}$ is well defined on all
$L^p(\R_+,\,m_\alpha)$, $1\le p\le\infty$, and maps $L^p(\R_+,\,m_\alpha)$ into itself, is 
$$
\mathcal{A}_j^{\lag}=
\big\{\alpha=(\alpha_1,\ldots,\alpha_d): \alpha_j>-3/2, \alpha_i>-1\,\, {\rm for}\,\, i\neq j\big\}.
$$ 

It turns out that $\{\widetilde{T}_t^{\alpha,{{\lag}},j}\}$, in contrast with $\{T_t^{\alpha,\lag}\}$, 
is not conservative. It was computed in \cite[p.\,234]{No2} that
\begin{equation} \label{T1polmod}
\widetilde{T}_t^{\alpha,{{\lag}},j}\textbf{1}(x) = e^{-t\slash 2}\;
	\frac{\Gamma(\alpha_j+ 3 \slash 2)}{\Gamma(\alpha_j+2)} 
		\bigg( \frac{x_j e^{-t}}{1-e^{-t}} \bigg)^{1\slash 2}
		{_1F_1}\bigg(\frac{1}2;  \alpha_j+2;  -\frac{x_j e^{-t}}{1-e^{-t}} \bigg), \qquad x \in \R_+,
\end{equation}
where ${_1F_1}$ denotes the confluent hypergeometric function, cf. \cite[Section 9.9]{Leb}.
This was achieved by expanding $\widetilde{T}_t^{\alpha,{{\lag}},j}\textbf{1}$
with respect to a system of `differentiated' Laguerre polynomials, then applying
$\widetilde{T}_t^{\alpha,{{\lag}},j}$ spectrally and summing back the resulting expansion.
Actually, the same can be obtained more directly by combining the formula 
\cite[(9.11.2)]{Leb} (note a misprint there)
\begin{equation} \label{leb9112}
{_1F_1}(a; b; z) = e^{z} {_1F_1}(b-a;b;-z), \qquad b\neq 0,-1,-2,\ldots,
\end{equation}
with the following result.
\begin{lemma}[{\cite[Formula 2.15.5 (4)]{PBM}}] \label{prud}
Given $p,q>0$ and $\beta,\nu \in \mathbb{R}$, $\beta+\nu>0$, $\nu \neq -1,-2,\ldots$, we have
$$ 
 \int_0^{\infty} y^{\beta-1} e^{-py^2} I_{\nu}(qy)\, dy 
  = \frac{q^{\nu}}{2^{\nu+1} p^{(\beta+\nu)\slash 2}} \; \frac{\Gamma(\frac{\beta+\nu}2)}{\Gamma(\nu+1)} \;
	{{_1F_1}}\bigg( \frac{\beta+\nu}2; \nu+1; \frac{q^2}{4p}\bigg). 
$$
\end{lemma}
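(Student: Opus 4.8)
The plan is to expand the modified Bessel function in its defining power series and integrate term by term. Recall (cf. \cite[Chapter 5]{Leb}) that
\begin{equation*}
I_\nu(z) = \sum_{m=0}^\infty \frac{1}{m!\,\Gamma(m+\nu+1)}\Big(\frac z2\Big)^{2m+\nu}, \qquad z>0.
\end{equation*}
Substituting $z=qy$ into the integral and exchanging summation and integration, one is left with a sum of elementary Gaussian-type moments
\begin{equation*}
\int_0^\infty y^{\beta-1+2m+\nu}\,e^{-py^2}\,dy = \frac12\,p^{-(\beta+\nu)/2-m}\,\Gamma\Big(\frac{\beta+\nu}2+m\Big),
\end{equation*}
the last equality following from the substitution $u=py^2$ together with the definition of the Gamma function; this requires $\beta+\nu+2m>0$, which holds for every $m\ge0$ precisely because $\beta+\nu>0$.

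Collecting the constants, the resulting series takes the form
\begin{equation*}
\frac{q^\nu}{2^{\nu+1}p^{(\beta+\nu)/2}}\sum_{m=0}^\infty \frac{\Gamma(\frac{\beta+\nu}2+m)}{m!\,\Gamma(\nu+1+m)}\Big(\frac{q^2}{4p}\Big)^m.
\end{equation*}
To recognize this as the claimed confluent hypergeometric expression, I would rewrite the ratios of Gamma functions through Pochhammer symbols, using $\Gamma(a+m)=\Gamma(a)(a)_m$ with $a=\tfrac{\beta+\nu}2$ and $a=\nu+1$ (the latter legitimate since $\nu\neq-1,-2,\ldots$). After factoring out $\Gamma(\tfrac{\beta+\nu}2)/\Gamma(\nu+1)$, the remaining series is exactly $\sum_m \tfrac{(\frac{\beta+\nu}2)_m}{(\nu+1)_m}\tfrac{1}{m!}(\tfrac{q^2}{4p})^m = {_1F_1}(\tfrac{\beta+\nu}2;\nu+1;\tfrac{q^2}{4p})$, which yields the stated formula.

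The only point requiring care --- and the main obstacle --- is justifying the interchange of summation and integration. When $\nu>-1$ all coefficients $1/(m!\,\Gamma(m+\nu+1))$ are positive and Tonelli's theorem applies directly. For general $\nu\neq-1,-2,\ldots$ the factor $\Gamma(m+\nu+1)$ is negative for only finitely many $m$ (those with $m+\nu+1<0$), so I would split off this finite initial segment, which is integrated term by term trivially, and apply Tonelli to the remaining tail with positive coefficients. Equivalently, one can dominate all partial sums by the majorant $\Phi(y)=\sum_m (qy/2)^{2m+\nu}/(m!\,|\Gamma(m+\nu+1)|)$ and invoke dominated convergence: near the origin $\Phi(y)\simeq y^\nu$, so $y^{\beta-1}\Phi(y)$ is integrable there thanks to $\beta+\nu>0$, while at infinity $\Phi(y)=O(y^{-1/2}e^{qy})$ up to a finite polynomial correction, which is controlled by the Gaussian factor $e^{-py^2}$ since $p>0$. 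Either route validates the term-by-term computation and completes the proof.
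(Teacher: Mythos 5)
Your proof is correct, but note that the paper offers no proof of this lemma to compare against: the statement is quoted verbatim from the integral tables of Prudnikov--Brychkov--Marichev \cite[Formula 2.15.5 (4)]{PBM} and used as a black box. Your argument --- expanding $I_\nu$ in its defining power series, integrating term by term against $y^{\beta-1}e^{-py^2}$, and resumming via $\Gamma(a+m)=\Gamma(a)(a)_m$ into the ${_1F_1}$ series --- is the standard derivation of such tabulated formulas and supplies a self-contained justification that the paper omits. The computation checks out: the Gaussian moment $\int_0^\infty y^{\beta+\nu+2m-1}e^{-py^2}\,dy=\tfrac12 p^{-(\beta+\nu)/2-m}\Gamma(\tfrac{\beta+\nu}2+m)$ is valid for every $m\ge0$ precisely because $\beta+\nu>0$, the constants collect to $q^\nu/(2^{\nu+1}p^{(\beta+\nu)/2})$, and the Pochhammer rewriting is legitimate since $\nu+1\neq0,-1,-2,\ldots$ and $\tfrac{\beta+\nu}2>0$. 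Your handling of the sum--integral interchange is also adequate, modulo one small imprecision: for non-integer $\nu<-1$ the factor $\Gamma(m+\nu+1)$ is not negative for \emph{all} $m$ with $m+\nu+1<0$ (its sign alternates on the intervals $(-k-1,-k)$); but your argument only needs that the coefficients are positive for all but finitely many $m$ --- namely all $m$ with $m+\nu+1>0$ --- which is true, so both the splitting-plus-Tonelli route and the dominated-convergence alternative go through.
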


From \eqref{T1polmod} it can be seen that 
$\|\widetilde{T}_t^{\alpha,{{\lag}},j}\textbf{1}\|_{\infty}>1$ for some $t>0$ 
if and only if $-3\slash 2<\alpha_j<-1\slash 2$; otherwise
$\|\widetilde{T}_t^{\alpha,{{\lag}},j}\textbf{1}\|_{\infty} = e^{-t\slash 2}$.
These facts are essentially pointed out in \cite[p.\,234]{No2}, but without a full justification.
Here we take an opportunity to fill up this gap. Clearly, 
in view of the tensor product structure of the kernels involved, 
it is enough to check, see \eqref{leb9112}, that the function
$u \mapsto [{\Gamma(\alpha_j+ 3 \slash 2)}\slash{\Gamma(\alpha_j+2)}] \sqrt{u} e^{-u}
{_1F_1}(\alpha_j+3\slash 2;  \alpha_j+2; u)$ acting on $\mathbb{R}_+$
has values bigger than $1$ if and only if $\alpha_j < -1\slash 2$,
and when $\alpha_j \ge -1\slash 2$ the closure of its range contains the point $\{1\}$.
This, however, follows readily from the next lemma applied with $\eta=\alpha_j+3\slash 2$ and
$\gamma=\alpha_j+2$. To state the result, it is convenient to
define an auxiliary function, which comes into play when investigating $L^{\infty}$-contraction property
of Laguerre semigroups. Given $\gamma \ge \eta > 0$, let
$$
H_{\eta,\gamma}(u) = \frac{\Gamma(\eta)}{\Gamma(\gamma)} u^{\gamma-\eta} e^{-u}
	{_1F_1}(\eta; \gamma; u), \qquad u >0.
$$
Note that $H_{\eta,\gamma}(u)>0$ for $u > 0$, as can be seen immediately from the 
hypergeometric series defining ${_1F_1}$, see \cite[(9.9.1)]{Leb}.
Moreover, the case $\eta=\gamma$ is trivial in the sense that $H_{\gamma,\gamma}(u) = \textbf{1}$, 
since ${_1F_1}(\gamma; \gamma; u)=e^{u}$.

The following estimates of $H_{\eta,\gamma}$ are crucial.
\begin{lemma} \label{lem:H}
Let $\gamma\ge\eta>0$ be fixed.
\begin{itemize}
\item[(a)]
If either $\eta=\gamma$ or $\eta \ge 1$, then $\|H_{\eta,\gamma}\|_{\infty}=1$
and $\lim_{u \to \infty} H_{\eta,\gamma}(u) = 1$.
\item[(b)]
If $\eta \neq \gamma$ and $\eta<1$, then $1< \|H_{\eta,\gamma}\|_{\infty} < \infty$
and there exists $u_0>0$ such that
$$
H_{\eta,\gamma}(u) > 1, \qquad u\ge u_0.
$$
\end{itemize}
\end{lemma}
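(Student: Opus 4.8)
The plan is to pass to a single integral representation of $H_{\eta,\gamma}$ from which both assertions follow by inspecting the sign of an elementary correction term. The case $\eta=\gamma$ is trivial, since ${_1F_1}(\gamma;\gamma;u)=e^u$ gives $H_{\gamma,\gamma}\equiv\mathbf{1}$; so I assume throughout that $\gamma>\eta>0$. Applying Kummer's transformation \eqref{leb9112} with $a=\eta$, $b=\gamma$, $z=u$ rewrites
$$
H_{\eta,\gamma}(u)=\frac{\Gamma(\eta)}{\Gamma(\gamma)}\,u^{\gamma-\eta}\,{_1F_1}(\gamma-\eta;\gamma;-u),
$$
and inserting the Euler integral representation of ${_1F_1}$ (legitimate because $\gamma>\gamma-\eta>0$; see \cite[Section 9.11]{Leb}) followed by the substitution $v=ut$ cancels all the $\Gamma$-factors and the power $u^{\gamma-\eta}$, producing the key identity
$$
H_{\eta,\gamma}(u)=\frac{1}{\Gamma(\gamma-\eta)}\int_0^u e^{-v}\Big(1-\frac{v}{u}\Big)^{\eta-1}v^{\gamma-\eta-1}\,dv.
$$

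Since $1=\frac{1}{\Gamma(\gamma-\eta)}\int_0^\infty e^{-v}v^{\gamma-\eta-1}\,dv$, splitting this integral at $u$ and subtracting yields
$$
H_{\eta,\gamma}(u)-1=\frac{1}{\Gamma(\gamma-\eta)}\big[I_1(u)-I_2(u)\big],
$$
where $I_1(u)=\int_0^u e^{-v}\big((1-v/u)^{\eta-1}-1\big)v^{\gamma-\eta-1}\,dv$ and $I_2(u)=\int_u^\infty e^{-v}v^{\gamma-\eta-1}\,dv>0$. Everything now hinges on the sign of $(1-v/u)^{\eta-1}-1$ on $(0,u)$, which is dictated solely by the position of $\eta$ relative to $1$. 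For part (a) with $\eta\ge1$ one has $(1-v/u)^{\eta-1}\le1$ on $[0,u]$, so $I_1(u)\le0$ and hence $H_{\eta,\gamma}(u)\le1$ for all $u>0$. At the same time $\lim_{u\to\infty}H_{\eta,\gamma}(u)=1$ follows from the representation by dominated convergence, the integrand being majorized by the integrable $e^{-v}v^{\gamma-\eta-1}$ while $(1-v/u)^{\eta-1}\to1$ pointwise. Together these give $\|H_{\eta,\gamma}\|_\infty=1$, which with the trivial subcase $\eta=\gamma$ proves (a).

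For part (b) with $\eta<1$ the sign reverses: $(1-v/u)^{\eta-1}\ge1$, so $I_1(u)\ge0$. Finiteness $\|H_{\eta,\gamma}\|_\infty<\infty$ is immediate from continuity of $H_{\eta,\gamma}$ on $(0,\infty)$ together with the finite boundary values $H_{\eta,\gamma}(0^+)=0$ (from $u^{\gamma-\eta}\to0$) and $\lim_{u\to\infty}H_{\eta,\gamma}=1$, the factor $(1-v/u)^{\eta-1}$ being integrable as $\eta>0$. The crux, and the step I expect to be the main obstacle, is to show $H_{\eta,\gamma}(u)>1$ for all large $u$, i.e. that the positive term $I_1$ eventually dominates the tail $I_2$. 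This is a competition between two different orders of decay: $I_2(u)$ is an upper incomplete Gamma function and decays like $u^{\gamma-\eta-1}e^{-u}$, whereas $I_1$ decays only polynomially. To quantify the latter I would use the elementary bound $(1-x)^{\eta-1}-1\ge(1-\eta)x$ for $x\in[0,1)$, valid since $\eta<1$ (integrate $\tfrac{d}{dx}(1-x)^{\eta-1}=(1-\eta)(1-x)^{\eta-2}\ge1-\eta$), which gives
$$
I_1(u)\ge\frac{1-\eta}{u}\int_0^u e^{-v}v^{\gamma-\eta}\,dv,
$$
a quantity of order $1/u$ because $\int_0^u e^{-v}v^{\gamma-\eta}\,dv\to\Gamma(\gamma-\eta+1)>0$. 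As this polynomial lower bound eventually overtakes the exponentially small $I_2(u)$, we obtain $H_{\eta,\gamma}(u)>1$ for $u\ge u_0$, and in particular $\|H_{\eta,\gamma}\|_\infty\ge H_{\eta,\gamma}(u_0)>1$, completing (b).
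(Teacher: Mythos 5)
Your reduction to the integral representation
$$
H_{\eta,\gamma}(u)=\frac{1}{\Gamma(\gamma-\eta)}\int_0^u e^{-v}\Big(1-\frac{v}{u}\Big)^{\eta-1}v^{\gamma-\eta-1}\,dv
$$
(Kummer's transformation \eqref{leb9112}, then the Euler integral, then the substitution $v=us$) is exactly the paper's starting point, and your part (a) is in substance the paper's argument; phrasing everything through the decomposition $H_{\eta,\gamma}(u)-1=\frac{1}{\Gamma(\gamma-\eta)}\bigl[I_1(u)-I_2(u)\bigr]$ changes nothing essential there. Where you genuinely diverge is the core of part (b). The paper bounds $(1-v/u)^{\eta-1}$ below by $1$ on $(0,u/2)$ and by $2^{1-\eta}$ on $(u/2,u)$ and then invokes the tail-comparison inequality \eqref{ax} with $\varepsilon=2^{1-\eta}-1$, so that the surplus created on $(u/2,u)$ swallows the tail $\int_u^\infty s^{\gamma-\eta-1}e^{-s}\,ds$. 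You instead linearize, $(1-x)^{\eta-1}-1\ge(1-\eta)x$ on $[0,1)$, which converts the surplus $I_1(u)$ into an explicit quantity of order $1/u$, and then beat the exponentially small tail $I_2(u)$. Both arguments are valid; yours is a little more quantitative (it yields $H_{\eta,\gamma}(u)-1\gtrsim 1/u$ for large $u$), while the paper's stays entirely within the crude two-piece splitting it already set up.

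There is, however, one step you have not actually proved: the finiteness of $\|H_{\eta,\gamma}\|_\infty$ in case (b). You deduce it from $\lim_{u\to\infty}H_{\eta,\gamma}(u)=1$, but your justification of that limit --- ``the factor $(1-v/u)^{\eta-1}$ being integrable'' --- is not an argument: the dominated convergence you used in part (a) rested on $(1-v/u)^{\eta-1}\le 1$, and for $\eta<1$ this factor exceeds $1$ and blows up as $v\to u^-$, so you have exhibited no $u$-independent integrable majorant. The claim is true and the gap is easy to close: split the integral at $u/2$. On $(0,u/2)$ one has $(1-v/u)^{\eta-1}\le 2^{1-\eta}$, so dominated convergence (with majorant $2^{1-\eta}e^{-v}v^{\gamma-\eta-1}$) shows that this piece tends to $\Gamma(\gamma-\eta)$; on $(u/2,u)$, bounding $e^{-v}\le e^{-u/2}$, $v^{\gamma-\eta-1}\le C\,u^{\gamma-\eta-1}$ and $\int_{u/2}^u(1-v/u)^{\eta-1}\,dv=2^{-\eta}u/\eta$, the contribution is $O\big(u^{\gamma-\eta}e^{-u/2}\big)\to 0$. (The paper sidesteps the issue by quoting the continuity of ${_1F_1}(\eta;\gamma;\cdot)$ together with the asymptotics ${_1F_1}(\eta;\gamma;u)\simeq\frac{\Gamma(\gamma)}{\Gamma(\eta)}e^{u}u^{\eta-\gamma}$, $u\to\infty$, from \cite[(9.12.8)]{Leb}, which gives boundedness at once.) With this repair your proof is complete.
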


\begin{proof}
The case $\eta=\gamma$ is trivial, so assume that $\eta<\gamma$. Using \eqref{leb9112} we write 
$$
H_{\eta,\gamma}(u) = \frac{\Gamma(\eta)}{\Gamma(\gamma)} u^{\gamma-\eta} {_1F_1}(\gamma-\eta; \gamma; -u).
$$
Then the integral representation, see \cite[(9.11.1)]{Leb}, which is valid for $0<a<b$,
$$
{_1F_1}(a;b;z) = \frac{\Gamma(b)}{\Gamma(a)\Gamma(b-a)} \int_0^{1} s^{a-1}(1-s)^{b-a-1}e^{zs}\, ds,
$$
together with the change of variable $s\to s\slash u$, leads to
$$
H_{\eta,\gamma}(u) = \frac{1}{\Gamma(\gamma-\eta)} \int_0^u s^{\gamma-\eta-1} e^{-s}
	\Big(1-\frac{s}{u}\Big)^{\eta-1}\, ds.
$$
We now see that when $\eta \ge 1$,
$$
H_{\eta,\gamma}(u) < \frac{1}{\Gamma(\gamma-\eta)}\int_0^{\infty} s^{\gamma-\eta-1}e^{-s}\, ds = 1, \qquad
	u >0,
$$
and by the monotone convergence theorem, $\lim_{u\to\infty}H_{\eta,\gamma}(u)=1$. This proves (a).

To show (b) we first note that the boundedness of $H_{\eta,\gamma}$  on $(0,\infty)$ is justified by the
continuity of ${_1F_1}(\eta;\gamma;\cdot)$ and the fact that, see \cite[(9.9.1), (9.12.8)]{Leb},
$$
\lim_{u \to 0^+} {_1F_1}(\eta;\gamma;u)=1 \qquad \textrm{and} \qquad
{_1F_1}(\eta;\gamma;u) \simeq \frac{\Gamma(\gamma)}{\Gamma(\eta)}e^{u}u^{\eta-\gamma}, \quad u \to \infty.
$$
Next we observe that given $\varepsilon >0$, there exists $u_0>0$ such that
\begin{equation}\label{ax}
\int_u^{\infty} s^{\gamma-\eta-1}e^{-s}\, ds < \varepsilon \int_{u\slash 2}^{u} s^{\gamma-\eta-1}e^{-s}\,ds,
	\qquad u \ge u_0.
\end{equation}
Indeed, assuming that $u$ is so large that $s^{\gamma-\eta-1}e^{-s}<e^{-3s\slash 4}$ and
$s^{\gamma-\eta-1}e^{-s}>e^{-5s\slash 4}$ for $s\ge u\slash 2$, it is enough to check that
$$
\int_u^{\infty}e^{-3s\slash 4}\, ds < \varepsilon \int_{u\slash 2}^u e^{-5s\slash 4}\, ds
$$
for $u$ sufficiently large; this, however, is immediate.

Assume that $\eta < 1$. Taking $\varepsilon = 2^{1-\eta}-1$ in \eqref{ax}, we get for $u \ge u_0$
\begin{align*}
\int_0^u s^{\gamma-\eta-1} e^{-s} \Big(1-\frac{s}u\Big)^{\eta-1}\, ds & > \int_0^{u\slash 2}
	s^{\gamma-\eta-1}e^{-s}\, ds + \int_{u\slash 2}^{u} s^{\gamma-\eta-1}e^{-s}
	\Big(1-\frac{u\slash 2}u\Big)^{\eta-1}\, ds \\
& = \int_0^{u\slash 2} s^{\gamma-\eta-1}e^{-s}\, ds + 2^{1-\eta}\int_{u\slash 2}^u
 s^{\gamma-\eta-1}e^{-s}\,ds \\
& > \int_0^{\infty} s^{\gamma-\eta-1}e^{-s}\, ds = \Gamma(\gamma-\eta).
\end{align*}
The conclusion follows.
\end{proof}

Properties of $\{\widetilde{T}_t^{\alpha,{{\lag}},j}\}$, $j=1,\ldots,d$, deduced above, the boundedness
of $H_{\alpha_j+\frac{3}2,\alpha_j+2}$ and Lemma \ref{Schur} imply the following.
\begin{propo}
Let $j\in\{1,\ldots,d\}$ be fixed and $\alpha \in \mathcal{A}_j^{\lag}$. 
If $\alpha_j \ge -1\slash 2$, then $\{\widetilde{T}_t^{\alpha,{{\lag}},j}\}$ is 
a submarkovian (but not Markovian) symmetric diffusion semigroup satisfying, for each $1\le p \le \infty$,
$$
 \|\widetilde{T}_t^{\alpha,{{\lag}},j}\|_{p\to p} \le e^{-t\slash 2}, \qquad t>0.
$$
If $\alpha_j<-1\slash 2$, then $\{\widetilde{T}_t^{\alpha,{{\lag}},j}\}$ is not an $L^p$-contractive
semigroup, but there exists a constant $c=c(\alpha)>1$ such that the above estimate holds with
the right-hand side multiplied by $c$.
\end{propo}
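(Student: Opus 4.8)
The plan is to reduce everything to the auxiliary function $H_{\eta,\gamma}$ and the estimates of Lemma \ref{lem:H}. First I would rewrite the explicit formula \eqref{T1polmod} for $\widetilde{T}_t^{\alpha,{{\lag}},j}\textbf{1}$ in the compact form
$$
\widetilde{T}_t^{\alpha,{{\lag}},j}\textbf{1}(x) = e^{-t\slash 2}\, H_{\alpha_j+3\slash 2,\,\alpha_j+2}\Big( \frac{x_j e^{-t}}{1-e^{-t}} \Big), \qquad x \in \R_{+}.
$$
This follows by combining \eqref{T1polmod} with \eqref{leb9112} applied to $a=\alpha_j+3\slash 2$, $b=\alpha_j+2$, so that $\gamma-\eta=1\slash 2$ and $u^{\gamma-\eta}=\sqrt{u}$ reproduce the factor $\sqrt{x_j e^{-t}\slash(1-e^{-t})}$; here $\eta=\alpha_j+3\slash 2$ and $\gamma=\alpha_j+2$ satisfy $\gamma\ge\eta>0$ precisely because $\alpha\in\mathcal{A}_j^{\lag}$. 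Since for each fixed $t>0$ the argument $u=x_j e^{-t}\slash(1-e^{-t})$ sweeps all of $(0,\infty)$ as $x_j$ ranges over $(0,\infty)$, and $H_{\eta,\gamma}$ is continuous, I obtain
$$
\sup_{x\in\R_{+}} \widetilde{T}_t^{\alpha,{{\lag}},j}\textbf{1}(x) = e^{-t\slash 2}\, \|H_{\alpha_j+3\slash 2,\,\alpha_j+2}\|_{\infty}.
$$

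Suppose first $\alpha_j\ge -1\slash 2$, equivalently $\eta\ge 1$. Then Lemma \ref{lem:H}(a) gives $\|H_{\alpha_j+3\slash 2,\alpha_j+2}\|_\infty=1$, whence $\widetilde{T}_t^{\alpha,{{\lag}},j}\textbf{1}\le e^{-t\slash 2}\,\textbf{1}<\textbf{1}$ for $t>0$. The integral kernel $e^{-t}\sqrt{x_jy_j}\,G_t^{\alpha+e_j,{\lag}}(x,y)$ is nonnegative and symmetric in $x,y$ (the heat kernel $G^{\alpha+e_j,{\lag}}$ is symmetric and $\sqrt{x_jy_j}$ is symmetric), so Lemma \ref{Schur} applied on $(\R_{+},m_\alpha)$ with $B=e^{-t\slash 2}$ yields $\|\widetilde{T}_t^{\alpha,{{\lag}},j}\|_{p\to p}\le e^{-t\slash 2}$ for every $1\le p\le\infty$. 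The submarkovian property is then immediate: (ii) and (iii) hold by construction (self-adjoint extension and the spectral theorem), (iv) holds since the kernel is strictly positive, and (v') is exactly $\widetilde{T}_t^{\alpha,{{\lag}},j}\textbf{1}\le\textbf{1}$; the semigroup is not Markovian because $\widetilde{T}_t^{\alpha,{{\lag}},j}\textbf{1}\ne\textbf{1}$ (its sup norm equals $e^{-t\slash 2}<1$).

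Now suppose $\alpha_j<-1\slash 2$, equivalently $\eta<1$ (and $\eta\ne\gamma$). Put $c=\|H_{\alpha_j+3\slash 2,\alpha_j+2}\|_\infty$; by Lemma \ref{lem:H}(b), $1<c<\infty$. The same kernel estimate feeds Lemma \ref{Schur} with $B=c\,e^{-t\slash 2}$, giving $\|\widetilde{T}_t^{\alpha,{{\lag}},j}\|_{p\to p}\le c\,e^{-t\slash 2}$ for all $p$ and all $t>0$, with a single constant $c$ because the $t$-dependence factors out as $e^{-t\slash 2}$. To see that the semigroup fails to be $L^p$-contractive, I would use that for a positive integral operator the $L^\infty$-operator norm equals $\|\widetilde{T}_t^{\alpha,{{\lag}},j}\textbf{1}\|_\infty=e^{-t\slash 2}c$; since $c>1$, choosing $t$ small enough (namely $0<t<2\log c$) gives $\|\widetilde{T}_t^{\alpha,{{\lag}},j}\|_{\infty\to\infty}>1$, so contractivity already fails on $L^\infty$.

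The analytic content is entirely carried by Lemma \ref{lem:H}, which is already available, so the remaining obstacle is bookkeeping. The two points requiring care are the identity $\sup_x\widetilde{T}_t^{\alpha,{{\lag}},j}\textbf{1}(x)=e^{-t\slash 2}\|H\|_\infty$ (needing that $u$ ranges over all of $(0,\infty)$ and that $H$ is continuous) and the identification $\|\widetilde{T}_t^{\alpha,{{\lag}},j}\|_{\infty\to\infty}=\|\widetilde{T}_t^{\alpha,{{\lag}},j}\textbf{1}\|_\infty$ for the positive kernel operator, which upgrades the mere failure of (v') into genuine non-contractivity. One should also note that although the underlying measure is $m_\alpha$ rather than $m_{\alpha+e_j}$, the kernel stays symmetric with respect to $m_\alpha$, so both Lemma \ref{Schur} and the self-adjointness in $L^2(\R_{+},m_\alpha)$ apply without change.
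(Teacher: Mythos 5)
Your proof is correct and follows essentially the same route as the paper: rewriting \eqref{T1polmod} via the Kummer transformation \eqref{leb9112} as $e^{-t/2}H_{\alpha_j+3/2,\,\alpha_j+2}\big(x_je^{-t}/(1-e^{-t})\big)$, then invoking Lemma \ref{lem:H} to split the cases $\alpha_j\ge -1/2$ and $\alpha_j<-1/2$, and Lemma \ref{Schur} to pass from the bound on $\widetilde{T}_t^{\alpha,\lag,j}\mathbf{1}$ to the $L^p$ operator norms. Your explicit use of $\|\widetilde{T}_t^{\alpha,\lag,j}\|_{\infty\to\infty}=\|\widetilde{T}_t^{\alpha,\lag,j}\mathbf{1}\|_{\infty}$ for the positive kernel operator just makes precise the non-contractivity claim the paper states more briefly.
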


\section{Standard Laguerre function semigroup} \label{sec:lag_standard}

The differential operator related to the system $\{\mathcal{L}_k^{\alpha}: k \in \mathbb{N}^d\}$ is
$$
 L_\alpha^{\mathcal L}= - \sum _{i=1}^{d}\left( x_i\frac{\partial^2}{\partial x_i^2}+
  \frac{\partial}{\partial x_i}-\Big(\frac{x_i^2+\alpha _i^2}{4x_i}\Big)\right).
$$
It is formally symmetric and positive in $L^2(\R_{+})$ and we have 
$L_\alpha^{\mathcal L}\mathcal L_k^{\alpha}=(|k|+(|\alpha|+d)/2)\mathcal L_k^{\alpha}$.
The operator $L_\alpha^\mathcal L$ has a natural self-adjoint extension (denoted by the same symbol) 
whose spectral decomposition is given by the $\mathcal L_k^{\alpha}$.
The corresponding heat semigroup $\{T_t^{\alpha,\mathcal L}\}=\{\exp(-tL_\alpha^{\mathcal L})\}$ 
is given in $L^2(\R_+)$ by
\begin{equation*} 
T_t^{\alpha,\mathcal L}f=\sum_{n=0}^{\infty} e^{-t(n+(|\alpha|+d)/2)} \sum_{|k|=n}
\langle f,\mathcal L_k^{\alpha} \rangle\mathcal L_k^{\alpha}, \qquad f\in L^2(\R_+).
\end{equation*}
We have the integral representation
\begin{equation} \label{int_cal}
  T_t^{\alpha,\mathcal L}f(x)=\int_{\R_{+}} G_t^{\alpha,\mathcal L}(x,y)f(y)\,dy, 
  \qquad x\in \R_{+},
\end{equation}
where, for $x,y\in \R_{+}$,
\begin{align*}
G^{\alpha,\mathcal L}_t(x,y)&=\sum_{n=0}^\infty e^{-t(n+(|\alpha|+d)/2)} 
\sum_{|k|=n} \mathcal L_k^\alpha(x)\mathcal L_k^\alpha(y)\\
&=e^{-t(|\alpha|+d)/2}G_{t}^{\alpha,{\lag}}(x,y) e^{-\sum_{i=1}^d(x_i+y_i)\slash 2}
\prod_{i=1}^d (x_i y_i)^{\alpha_i\slash 2}\\
&=(2\sinh (t/2))^{-d}\exp\Big({-\frac{1}{2} \coth(t/2)\sum_{i=1}^d (x_i+y_i)}\Big)
\prod^{d}_{i=1}\, I_{\alpha_i}\left(\frac{ \sqrt{x_i y_i}}{\sinh (t/2)}\right).
\end{align*}
When $\alpha \in [0,\infty)^d$, the action of $T_t^{\alpha,\mathcal L}$ extends to
$L^1(\R_+)+L^{\infty}(\R_+)$ by means of \eqref{int_cal}.
Indeed, for such $\alpha$ the asymptotics \eqref{bes} show that the integral in 
\eqref{int_cal} converges for every $f\in L^p(\R_+)$, $1\le p \le \infty$, and every $x \in \R_+$.
However, this is not the case if at least one of the coordinates of $\alpha$ is less than $0$. 
Let $\tilde{\alpha} = \min_{1\le i \le d} \alpha_i$ and assume that $\tilde{\alpha} < 0$.
Then $\{T_t^{\alpha,\mathcal L}\}$ is well defined by \eqref{int_cal} on $L^p(\R_+)$ 
if and only if $p>-2/\tilde\alpha$. The additional requirement that $T_t^{\alpha,\mathcal L}f\in L^p(\R_+)$
whenever $f\in L^p(\R_+)$ forces another restriction on $p$, namely $p<2/(2+\tilde\alpha)$.
Thus the question of $L^p$-contractivity of $\{T_t^{\alpha,\mathcal{L}}\}$ makes sense only
for $\alpha \in [0,\infty)^d$.

We state the main result of this section.
\begin{theor} \label{th:contr-cal}
Let $\alpha\in[0,\infty)^d$. Then $\{T_t^{\alpha,\mathcal L}\}$ is a  submarkovian 
(but not Markovian) symmetric diffusion semigroup. More precisely, for each $1\le p\le\infty$,
\begin{equation*}
\|T_t^{\alpha,\mathcal L}\|_{p\to p}\le  \big(\cosh(t/2)\big)^{-d}, \qquad t>0.
\end{equation*}
\end{theor}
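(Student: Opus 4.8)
The plan is to reduce everything to the Schur-type estimate of Lemma~\ref{Schur}: since $G_t^{\alpha,\mathcal L}$ is a nonnegative symmetric kernel, it suffices to bound $T_t^{\alpha,\mathcal L}\textbf{1}(x)=\int_{\R_+}G_t^{\alpha,\mathcal L}(x,y)\,dy$ by $(\cosh(t/2))^{-d}$ uniformly in $x$, and Lemma~\ref{Schur} with $B=(\cosh(t/2))^{-d}$ then yields the asserted operator bound for every $1\le p\le\infty$. Because the kernel factorizes as a tensor product over the coordinates, the full bound is the product of the one-dimensional ones, so I would fix $t>0$ and a single $\alpha\ge0$ and compute the one-dimensional integral $\int_0^\infty G_t^{\alpha,\mathcal L}(x,y)\,dy$ in closed form.

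To evaluate it I would first pull out the $x$-dependent factor $\exp(-\tfrac12\coth(t/2)\,x)$ and make the substitution $y=s^2$, which turns the remaining integral into $2\int_0^\infty s\,e^{-ps^2}I_\alpha(qs)\,ds$ with $p=\tfrac12\coth(t/2)$ and $q=\sqrt{x}/\sinh(t/2)$. This is precisely the integral supplied by Lemma~\ref{prud} (taken with $\beta=2$, $\nu=\alpha$, which is legitimate since $\beta+\nu=2+\alpha>0$ and $\alpha\ge0$), producing the confluent hypergeometric function ${_1F_1}(1+\tfrac\alpha2;\alpha+1;q^2/4p)$ whose argument simplifies to $u:=x/\sinh t$. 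After collecting the powers of $\sinh(t/2)$ and $\cosh(t/2)$ coming from $p$, $q$, the overall prefactor and the substitution, and then using the Kummer relation \eqref{leb9112} to convert ${_1F_1}(1+\tfrac\alpha2;\alpha+1;u)$ into the form appearing in the definition of $H_{\eta,\gamma}$, I expect the one-dimensional integral to collapse to
\begin{equation*}
\int_0^\infty G_t^{\alpha,\mathcal L}(x,y)\,dy=\frac{1}{\cosh(t/2)}\,
e^{-\frac{x}{2}\tanh(t/2)}\,H_{1+\alpha/2,\,\alpha+1}\Big(\frac{x}{\sinh t}\Big).
\end{equation*}
The delicate point is exactly this bookkeeping: one must check that all the hyperbolic prefactors conspire so that precisely the factor $(\cosh(t/2))^{-1}$ survives in front; everything else in the reduction is formal.

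Granting this identity the bound is immediate. The parameters $\eta=1+\alpha/2$ and $\gamma=\alpha+1$ satisfy $\gamma\ge\eta\ge1$ for every $\alpha\ge0$, so Lemma~\ref{lem:H}(a) gives $0<H_{1+\alpha/2,\,\alpha+1}\le1$, while the exponential factor is clearly $\le1$ for $x\ge0$ and $t>0$. Hence the one-dimensional integral is at most $(\cosh(t/2))^{-1}$, and multiplying over the $d$ coordinates yields $T_t^{\alpha,\mathcal L}\textbf{1}(x)\le(\cosh(t/2))^{-d}$ for all $x\in\R_+$. Lemma~\ref{Schur} then delivers $\|T_t^{\alpha,\mathcal L}\|_{p\to p}\le(\cosh(t/2))^{-d}$ for every $1\le p\le\infty$. (A sanity check is the case $\alpha=0$, where $H_{1,1}\equiv\textbf{1}$ and the integral reduces to $(\cosh(t/2))^{-1}e^{-\frac{x}2\tanh(t/2)}$, which makes the bound transparent and even sharp in the limit $x\to0^+$.)

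It remains to record the structural claims. Contractivity, i.e.\ property (i), follows since $(\cosh(t/2))^{-d}\le1$; self-adjointness (ii) and strong $L^2$-continuity (iii) are built into the spectral definition of $\{T_t^{\alpha,\mathcal L}\}$; and positivity (iv) holds because the kernel $G_t^{\alpha,\mathcal L}$ is strictly positive. Thus $\{T_t^{\alpha,\mathcal L}\}$ is a symmetric diffusion semigroup. Finally, the pointwise estimate $T_t^{\alpha,\mathcal L}\textbf{1}(x)\le(\cosh(t/2))^{-d}<1$ for $t>0$ shows that condition (v') holds, so the semigroup is submarkovian, while the same strict inequality gives $T_t^{\alpha,\mathcal L}\textbf{1}\not\equiv\textbf{1}$, so it is not Markovian.
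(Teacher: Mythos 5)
Your proof is correct and takes essentially the same route as the paper's: reduce to $d=1$ by the tensor product structure, evaluate $T_t^{\alpha,\mathcal L}\textbf{1}$ via Lemma \ref{prud}, recognize the result as $(\cosh(t\slash 2))^{-1}e^{-\frac{x}{2}\tanh(t\slash 2)}\,H_{1+\alpha\slash 2,\,\alpha+1}(x\slash\sinh t)$, bound the $H$-factor by Lemma \ref{lem:H}(a), and conclude with Lemma \ref{Schur}. The only (harmless) slip is that no Kummer transformation \eqref{leb9112} is needed at the step you flag: Lemma \ref{prud} already outputs ${_1F_1}(1+\alpha\slash 2;\alpha+1;\cdot)$, which is precisely the confluent hypergeometric function occurring in the definition of $H_{1+\alpha\slash 2,\,\alpha+1}$.
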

\begin{proof}
It is enough to consider the one-dimensional case, since then the multi-dimensional result follows
easily by the tensor product structure of the semigroup. Using Lemma \ref{prud} we get
\begin{align*}
T^{\alpha,\mathcal L}_{t}\textbf{1}(x)&=\frac1{\cosh (t/2)}\exp\Big(-\frac{x}{2}\tanh (t/2)\Big)\\
&\quad \times\frac{\Gamma(\frac{\alpha}2+1)}{\Gamma(\alpha+1)}\Big(\frac{x}{\sinh t}\Big)^{\alpha/2}
 \exp\Big(-\frac{x}{\sinh t}\Big){\,}_1F_1\Big(\frac{\alpha}2+1;\alpha+1; \frac{x}{\sinh t}\Big).
\end{align*}
This can be written in terms of the function $H_{\eta,\gamma}$ as
$$
T^{\alpha,\mathcal L}_{t}\textbf{1}(x) = \frac1{\cosh (t/2)}\exp\Big(-\frac{x}{2}\tanh (t/2)\Big)\;
H_{\frac{\alpha}2+1,\alpha+1}\Big(\frac{x}{\sinh t} \Big).
$$
Applying now Lemma \ref{lem:H} (a) we see that 
$\|T^{\alpha,\mathcal L}_{t}\textbf{1}\|_\infty\le (\cosh (t/2))^{-1}$,
and the conclusion follows by Lemma \ref{Schur}.
\end{proof}

We remark that Theorem \ref{th:contr-cal} can be proved in another way, without relying on Lemma 
\ref{lem:H}. Given $z>0$, the function $\nu \mapsto I_{\nu}(z)$ is decreasing for $\nu\ge 0$
(see the proof of \cite[Proposition 2.1]{NS} and references given there) 
and hence $G^{\alpha,\mathcal L}_t(x,y)\le G^{\textbf{0},\mathcal L}_t(x,y)$,
where $\textbf{0}=(0,\ldots,0)$. Thus it is sufficient to prove the bound only for 
$\{T^{\textbf{0},\mathcal L}_{t}\}$, and in that case, merely by using Lemma \ref{prud},
$$
T^{\textbf{0},\mathcal L}_{t} \textbf{1}(x) = \big(\cosh (t/2)\big)^{-d}
	\exp\Big(-\frac{1}{2}\tanh (t/2) \sum_{i=1}^d x_i\Big).
$$

The modified semigroups $\{\widetilde{T}_t^{\alpha,\mathcal L,j}\}$, $j=1,\ldots,d$, are generated in
$L^2(\R_+)$ by suitable self-adjoint extensions of the operators $L_{\alpha+e_j}^{\mathcal L}+1/2$,
see \cite[Section 5]{NS1}. Their integral representation is
$$
\widetilde{T}_t^{\alpha,\mathcal L,j}f(x) = e^{-t/2}\int_{\R_+} G_t^{\alpha+e_j,\mathcal L}(x,y)f(y)\, dy,
\qquad x \in \R_+, \quad t>0.
$$
Thus $\{\widetilde{T}_t^{\alpha,\mathcal L,j}\}$ coincide, up to the factor $e^{-t/2}$, 
with the original Laguerre semigroups $\{T_t^{\alpha+e_j,\mathcal L}\}$. 
Therefore the sharp range of $\alpha$'s for which $\{\widetilde{T}_t^{\alpha,\mathcal L,j}\}$ 
is well defined on all $L^p(\R_+)$, $1\le p\le\infty$, and maps $L^p(\R_+)$ into itself, is 
$$
\mathcal{A}_j^{\mathcal{L}} = 
\big\{\alpha: \alpha_j\ge-1, \alpha_i\ge0\,\, {\rm for}\,\, i\neq j\big\}.
$$ 
Moreover, the above results concerning $\{T_t^{\alpha,\mathcal L}\}$ imply the following.
\begin{propo}
Let $j \in \{1,\ldots,d\}$ be fixed and let $\alpha \in \mathcal{A}_j^{\mathcal{L}}$. 
Then $\{\widetilde{T}_t^{\alpha,\mathcal L,j}\}$ is a submarkovian 
(but not Markovian) symmetric diffusion semigroup satisfying, for $1\le p \le \infty$,
$$
\|\widetilde{T}_t^{\alpha,\mathcal{L},j}\|_{p \to p} \le e^{-t/2} \big(\cosh(t/2)\big)^{-d}, \qquad t >0.
$$
\end{propo}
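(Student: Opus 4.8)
The plan is to treat this Proposition as a direct corollary of Theorem \ref{th:contr-cal}, exploiting that, as recorded just above its statement, the modified semigroup is merely a scalar multiple of an ordinary standard Laguerre function semigroup with shifted order. Concretely, the integral representation yields the operator identity $\widetilde{T}_t^{\alpha,\mathcal L,j} = e^{-t/2}\,T_t^{\alpha+e_j,\mathcal L}$, valid on $L^1(\R_+)+L^\infty(\R_+)$. The first thing I would check is that the hypothesis $\alpha \in \mathcal{A}_j^{\mathcal{L}}$ places the shifted index inside the range required by Theorem \ref{th:contr-cal}: since $\alpha_j \ge -1$ and $\alpha_i \ge 0$ for $i\neq j$, the $j$th coordinate of $\alpha+e_j$ equals $\alpha_j+1\ge 0$ while the others are unchanged, so $\alpha+e_j \in [0,\infty)^d$.

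With this observation the norm bound is immediate: Theorem \ref{th:contr-cal} gives $\|T_t^{\alpha+e_j,\mathcal L}\|_{p\to p}\le (\cosh(t/2))^{-d}$ for all $1\le p\le\infty$ and $t>0$, and multiplying by the scalar $e^{-t/2}$ produces exactly $\|\widetilde{T}_t^{\alpha,\mathcal L,j}\|_{p\to p}\le e^{-t/2}(\cosh(t/2))^{-d}$. To see that $\{\widetilde{T}_t^{\alpha,\mathcal L,j}\}$ is a symmetric diffusion semigroup I would transfer conditions (i)--(iv) from $\{T_t^{\alpha+e_j,\mathcal L}\}$: the scalar prefactors satisfy $e^{-t/2}e^{-s/2}=e^{-(t+s)/2}$, so the semigroup law is preserved and $\widetilde{T}_0^{\alpha,\mathcal L,j}=\textrm{Id}$; self-adjointness in $L^2$ and strong $L^2$-continuity at $t\to0^+$ survive multiplication by a positive, $t$-continuous scalar; contractivity on each $L^p$ follows from the displayed bound together with $e^{-t/2}(\cosh(t/2))^{-d}\le 1$ for $t\ge 0$; and positivity holds because the kernel $e^{-t/2}G_t^{\alpha+e_j,\mathcal L}$ is strictly positive.

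For the submarkovian property and the failure of the Markovian one, I would inspect $\widetilde{T}_t^{\alpha,\mathcal L,j}\textbf{1}$ directly. The proof of Theorem \ref{th:contr-cal} supplies the pointwise estimate $T_t^{\alpha+e_j,\mathcal L}\textbf{1}(x)\le (\cosh(t/2))^{-d}$, whence $\widetilde{T}_t^{\alpha,\mathcal L,j}\textbf{1}(x)\le e^{-t/2}(\cosh(t/2))^{-d}$, and the right-hand side is strictly less than $1$ for every $t>0$. This single inequality both verifies (v$'$), making the semigroup submarkovian, and contradicts (v), showing it is not Markovian. I do not anticipate a genuine obstacle: the entire argument rides on the identity $\widetilde{T}_t^{\alpha,\mathcal L,j}=e^{-t/2}T_t^{\alpha+e_j,\mathcal L}$ and on the fact that $\mathcal{A}_j^{\mathcal{L}}$ is tailored so that the shift $\alpha\mapsto\alpha+e_j$ carries it into $[0,\infty)^d$. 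The only point deserving explicit care is confirming that $\alpha+e_j$ genuinely satisfies the hypotheses of Theorem \ref{th:contr-cal}, namely the coordinate check in the first paragraph; everything else is a routine transfer of properties through a positive scalar factor.
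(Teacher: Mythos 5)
Your proposal is correct and is essentially the paper's own argument: the paper states the identity $\widetilde{T}_t^{\alpha,\mathcal L,j}=e^{-t/2}\,T_t^{\alpha+e_j,\mathcal L}$ and presents the Proposition as an immediate consequence of Theorem \ref{th:contr-cal}, which is exactly the reduction you carry out. Your explicit check that $\alpha\in\mathcal{A}_j^{\mathcal{L}}$ forces $\alpha+e_j\in[0,\infty)^d$, and the observation that the strict inequality $e^{-t/2}(\cosh(t/2))^{-d}<1$ simultaneously yields (v$'$) and rules out (v), simply spell out the details the paper leaves implicit.
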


\section{Laguerre function semigroup of Hermite type} \label{sec:lag_herm}

Recall that the system $\{\varphi_k^{\alpha} : k \in \mathbb{N}^d\}$ is an orthonormal basis in $L^2(\R_+)$.
The associated differential operator, 
$$
 L_\alpha^\varphi=
-\Delta +\|x\|^2 + \sum _{i=1}^{d} \frac{1}{x_i^2} \left(\alpha _i^2 -\frac{1}{4}\right),
$$
is formally symmetric and positive in $L^2(\R_{+})$, and we have 
$L_\alpha^\varphi\varphi _k^{\alpha}=(4|k|+2|\alpha |+2d)\varphi _k^{\alpha}$.
The operator $L_\alpha^\varphi$ has a natural self-adjoint extension (denoted by the same symbol) 
whose spectral decomposition is given by the $\varphi_k^{\alpha}$, see \cite[p.\,402]{NS}.
The corresponding heat semigroup
$\{T_t^{\alpha,\varphi}\} = \{\exp(-tL_{\alpha}^{\varphi})\}$ is given in $L^2(\R_+)$ by
\begin{equation*}
T_t^{\alpha,\varphi}f =\sum_{n=0}^{\infty} e^{-t(4n+2|\alpha|+2d)} \sum_{|k|=n}
\langle f,\varphi _k^{\alpha} \rangle\varphi_k^{\alpha}, \qquad f\in L^2(\R_+).
\end{equation*}
We have the integral representation
\begin{equation} \label{int_phi}
 T_t^{\alpha,\varphi}f(x)=\int_{\R_{+}}G^{\alpha,\varphi}_t(x,y)f(y)\,dy,   \qquad x\in \R_{+},
\end{equation}
where, with the notation $x^2=(x_1^2,\ldots,x_d^2)$, for $x,y\in \R_{+}$,
\begin{align*}
G^{\alpha,\varphi}_t(x,y)&=\sum_{n=0}^\infty e^{-t(4n+2|\alpha|+2d)} 
\sum_{|k|=n} \varphi_k^\alpha(x)\varphi_k^\alpha(y)\\
& = 2^d e^{-2t(|\alpha|+d)}G_{4t}^{\alpha,{\lag}}(x^2,y^2) e^{-(\|x\|^2+\|y\|^2)\slash 2} 
\prod_{i=1}^d (x_i y_i)^{\alpha_i+1\slash 2} \\
&=(\sinh 2t)^{-d}\exp\Big({-\frac{1}{2} \coth(2t)\big(\|x\|^{2}+\|y\|^{2}\big)}\Big)
\prod^{d}_{i=1} \sqrt{x_i y_i}\, I_{\alpha_i}\left(\frac{x_i y_i}{\sinh 2t}\right).
\end{align*}

When $\alpha \in [-1\slash 2,\infty)^d$, the action of $T_t^{\alpha,\varphi}$ can be extended
by \eqref{int_phi} to $L^1(\R_+)+L^{\infty}(\R_+)$. Indeed, as can be easily verified by means of
the asymptotics \eqref{bes}, the integral in \eqref{int_phi} converges for every $f\in L^p(\R_+)$,
$1\le p \le \infty$, and every $x \in \R_+$, provided that $\alpha \in [-1\slash 2,\infty)^d$.
However, the case when at least one of the coordinates of $\alpha$ is less than $-1\slash 2$ is
different. Recall that $\tilde{\alpha} = \min_{1\le i \le d} \alpha_i$ 
and assume that $\tilde{\alpha} < -1\slash 2$.
Then $T_t^{\alpha,\varphi}$ is well defined on $L^p(\R_+)$ by \eqref{int_phi} if and only if
$p> 2\slash (2\tilde{\alpha}+3)$. Moreover, the requirement that $T_t^{\alpha,\varphi}f \in L^p(\R_+)$
whenever $f \in L^p(\R_+)$, imposes further restriction on $p$, namely $p < -2\slash (2\tilde{\alpha}+1)$.
Thus the question of $L^p$-contractivity of $\{T_t^{\alpha,\varphi}\}$ makes sense only for 
$\alpha \in [-1\slash 2,\infty)^d$.

The main result of this section reads as follows.
\begin{theor} \label{th:contr-phi}
Let $\alpha \in [-1\slash 2,\infty)^d$. If $\alpha \in (\{-1\slash 2\}\cup[1\slash 2,\infty))^d$,
then $\{T_t^{\alpha,\varphi}\}$ is a submarkovian (but not Markovian) symmetric diffusion semigroup
satisfying, for $1\le p \le \infty$,
$$
\|T_t^{\alpha,\varphi}\|_{p \to p} \le (\cosh 2t)^{-d\slash 2}, \qquad t>0.
$$
If $\alpha_i \in (-1\slash 2,1\slash 2)$ for some $i=1,\ldots,d$, then $\{T_t^{\alpha,\varphi}\}$
is not an $L^p$-contractive semigroup, but there exists a constant $c=c(\alpha)>1$ such that
the above estimate holds with the right-hand side multiplied by $c$.
\end{theor}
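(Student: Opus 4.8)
The plan is to follow the scheme of the proof of Theorem \ref{th:contr-cal}: reduce everything to a one-dimensional computation of $T_t^{\alpha,\varphi}\textbf{1}$, recognize the result in terms of the auxiliary function $H_{\eta,\gamma}$, and then read off the dichotomy from Lemma \ref{lem:H} while the operator bounds come from the Schur estimate of Lemma \ref{Schur}. Since $G_t^{\alpha,\varphi}$ has a tensor product structure, so does $T_t^{\alpha,\varphi}\textbf{1}$, hence it is enough to analyse a single coordinate.

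First I would compute $\int_0^\infty G_t^{\alpha,\varphi}(x,y)\,dy$ in dimension one. Applying Lemma \ref{prud} with $\beta=3/2$, $\nu=\alpha$, $p=\tfrac12\coth 2t$ and $q=x/\sinh 2t$ (legitimate since $\alpha\ge-1/2$ forces $\beta+\nu>0$ and $\nu\neq-1,-2,\dots$) produces a confluent hypergeometric function with argument $q^2/(4p)=x^2/\sinh 4t$. Combining the Gaussian prefactor with the factor $e^{u}$ furnished by the identity \eqref{leb9112}, and cancelling the powers of $2$, $\sinh 2t$ and $\cosh 2t$, the outcome should simplify to
\[
T_t^{\alpha,\varphi}\textbf{1}(x)=(\cosh 2t)^{-1/2}\exp\Big(-\tfrac{x^2}{2}\tanh 2t\Big)\,
H_{\eta,\gamma}\Big(\frac{x^2}{\sinh 4t}\Big),\qquad \eta=\tfrac{\alpha}{2}+\tfrac34,\quad\gamma=\alpha+1.
\]
One checks that $\gamma\ge\eta>0$ exactly when $\alpha\ge-1/2$, so Lemma \ref{lem:H} applies throughout the admissible range. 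The decisive observation is that $\eta=\gamma$ iff $\alpha=-1/2$ and $\eta\ge1$ iff $\alpha\ge1/2$; thus case (a) of Lemma \ref{lem:H} occurs precisely for $\alpha\in\{-1/2\}\cup[1/2,\infty)$ and case (b) precisely for $\alpha\in(-1/2,1/2)$, which is exactly the dichotomy asserted.

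For the first statement, when every $\alpha_i\in\{-1/2\}\cup[1/2,\infty)$, Lemma \ref{lem:H}(a) gives $H_{\eta_i,\gamma_i}\le1$, so each one-dimensional factor is at most $(\cosh 2t)^{-1/2}$ and therefore $T_t^{\alpha,\varphi}\textbf{1}(x)\le(\cosh 2t)^{-d/2}\le1$. Lemma \ref{Schur} then yields the stated operator-norm estimate and Condition (v'), positivity of the kernel gives (iv), and together with the defining properties (ii), (iii) this shows $\{T_t^{\alpha,\varphi}\}$ is submarkovian; it fails (v) because the bound is strict for $t>0$, so it is not Markovian.

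The second statement is where I expect the real work. The upper bound with the extra constant is easy: with $c=\prod_{i=1}^d\|H_{\eta_i,\gamma_i}\|_\infty$, finite by Lemma \ref{lem:H} and strictly greater than $1$ once some $\alpha_i\in(-1/2,1/2)$, Lemma \ref{Schur} gives $\|T_t^{\alpha,\varphi}\|_{p\to p}\le c\,(\cosh 2t)^{-d/2}$. To disprove contractivity I would work at $p=\infty$, where positivity of the kernel gives $\|T_t^{\alpha,\varphi}\|_{\infty\to\infty}=\|T_t^{\alpha,\varphi}\textbf{1}\|_\infty$, and the supremum factorizes as $(\cosh 2t)^{-d/2}\prod_i\sup_{x_i}g_i(x_i)$ with $g_i(x_i)=\exp(-\tfrac{x_i^2}{2}\tanh 2t)\,H_{\eta_i,\gamma_i}(x_i^2/\sinh 4t)$. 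The main obstacle is then a coordinated limit as $t\to0^+$: for a coordinate with $\alpha_i\in(-1/2,1/2)$ I would choose $x_i^2=u_i^\ast\sinh 4t$, where $H_{\eta_i,\gamma_i}$ attains its maximum $M_i>1$, while for the remaining coordinates I keep $x_i$ fixed so that the argument of $H$ tends to infinity and $H_{\eta_i,\gamma_i}\to1$. Because $\sinh 4t\,\tanh 2t\to0$, every Gaussian factor and $(\cosh 2t)^{-d/2}$ tend to $1$, so $\sup_{x_i}g_i(x_i)\to M_i$ for each $i$; hence $\|T_t^{\alpha,\varphi}\textbf{1}\|_\infty\to\prod_i M_i>1$, which is $>1$ for all sufficiently small $t>0$. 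This shows $\{T_t^{\alpha,\varphi}\}$ is not $L^\infty$-contractive, hence not $L^p$-contractive. The subtle point to handle carefully is precisely the tension between enlarging $H$ (large argument) and keeping the Gaussian factor near $1$ (small argument), resolved by coupling $x$ to $t$ through $\sinh 4t$.
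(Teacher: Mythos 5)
Your proposal is correct and follows essentially the same route as the paper: compute $T_t^{\alpha,\varphi}\textbf{1}$ in one dimension via Lemma \ref{prud}, recognize it as $(\cosh 2t)^{-1/2}\exp(-\tfrac{x^2}{2}\tanh 2t)\,H_{\frac{\alpha}{2}+\frac{3}{4},\alpha+1}(x^2/\sinh 4t)$, and conclude with Lemmas \ref{lem:H} and \ref{Schur}, handling the mixed multi-dimensional case by the tensor product structure. Even your negative argument matches the paper's, which couples the bad coordinates to $t$ via $x_i=c\sqrt{\sinh 4t}$ (you take the maximizer of $H$ instead of a fixed large $c$, an immaterial difference) and keeps the good coordinates fixed so that $H\to 1$.
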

\begin{proof}
We consider first the one-dimensional case. Using Lemma \ref{prud} we compute
\begin{align*}
T^{\alpha,\varphi}_{t}\textbf{1}(x) & = \frac1{\sqrt{\cosh2t}} \exp\Big(-\frac{x^2}2 \tanh 2t \Big)\\
& \quad \times \frac{\Gamma(\frac\alpha2+\frac34)}{\Gamma(\alpha+1)}
\Big(\frac{x^2}{\sinh 4t} \Big)^{\frac\alpha2+\frac14}
\exp\Big(-\frac{x^2}{\sinh 4t}\Big)  {_1F_1}\Big(\frac\alpha2+\frac34;\alpha+1; \frac{x^2}{\sinh 4t}\Big).
\end{align*}
This can be written in terms of the function $H_{\eta,\gamma}$ as
$$
T^{\alpha,\varphi}_{t}\textbf{1}(x) = \frac1{\sqrt{\cosh2t}} \exp\Big(-\frac{x^2}2 \tanh 2t\Big)\;
H_{\frac{\alpha}2+\frac{3}4,\alpha+1}\Big(\frac{x^2}{\sinh 4t} \Big).
$$
Then Lemma \ref{lem:H} shows that $\|T_t^{\alpha,\varphi}\textbf{1}\|_{\infty}\le (\cosh 2t)^{-1\slash 2}$
if $\alpha \in \{-1\slash 2\}\cup [1\slash 2,\infty)$, and that 
$\|T_t^{\alpha,\varphi}\textbf{1}\|_{\infty}> 1$ if $\alpha \in (-1\slash 2, 1\slash 2)$ and 
$t$ is sufficiently small. This, together with Lemma \ref{Schur}, gives the desired conclusion.
The last assertion follows by the boundedness of $H_{\frac{\alpha}2+\frac{3}4,\alpha+1}$.

Passing to the multi-dimensional case, we observe that by the tensor product structure of the semigroup
we have
$$
T_t^{\alpha,\varphi}(\textbf{1}\otimes\ldots\otimes\textbf{1})(x) = 
	T_t^{\alpha_1}\textbf{1}(x_1)\cdot \ldots \cdot T_t^{\alpha_d}\textbf{1}(x_d), \qquad x \in\R_+.
$$
Thus the desired positive results are consequences of the already justified one-dimensional case.

To prove the negative part, we assume for simplicity that $d=2$ and $\alpha=(\alpha_1,\alpha_2)$ with
$\alpha_1 \in (-1\slash 2,1\slash 2)$ and $\alpha_2\notin (-1\slash 2,1\slash 2)$;
in the general case the arguments are analogous.
Taking $x_1=c \sqrt{\sinh 4t}$, where $c$ is a sufficiently large constant, by Lemma \ref{lem:H} (b)
we see that 
$$
H_{\frac{\alpha_1}2+\frac{3}4,\alpha_1+1}\Big(\frac{x_1^2}{\sinh 4t} \Big) = 1+2\varepsilon
$$
for some $\varepsilon>0$ independent of $t$. It follows that 
$T_t^{\alpha_1,\varphi}\textbf{1}(x_1)>1+\varepsilon$ for $t>0$ sufficiently small, and hence
$\|T_t^{\alpha_1,\varphi}\textbf{1}\|_{\infty}>1+\varepsilon$ for such $t$.
On the other hand, taking $x_2=1$ and using Lemma \ref{lem:H} (a) we get
$\|T_t^{\alpha_2,\varphi}\textbf{1}\|_{\infty}>1\slash (1+\varepsilon)$, provided that $t$ is sufficiently
small. Altogether, this shows that 
$$
\|T_t^{\alpha,\varphi}(\textbf{1}\otimes\textbf{1})\|_{\infty} =
\|T_t^{\alpha_1,\varphi}\textbf{1}\|_{\infty} \|T_t^{\alpha_2,\varphi}\textbf{1}\|_{\infty} > 1
$$
for $t$ small enough. The proof is finished.
\end{proof}

The modified semigroups $\{\widetilde{T}_t^{\alpha,\varphi,j}\}$, $j=1,\ldots,d$, are generated in
$L^2(\R_+)$ by suitable self-adjoint extensions of the operators $L_{\alpha+e_j}^{\varphi}+2$,
see \cite[Section 4]{NS}. Their integral representation is
$$
\widetilde{T}_t^{\alpha,\varphi,j}f(x) = e^{-2t}\int_{\R_+} G_t^{\alpha+e_j,\varphi}(x,y)f(y)\, dy,
\qquad x \in \R_+, \quad t>0.
$$
Thus $\{\widetilde{T}_t^{\alpha,\varphi,j}\}$ coincide, up to the factor $e^{-2t}$, with the original
Laguerre semigroups $\{T_t^{\alpha+e_j,\varphi}\}$. 
It follows that the sharp range of $\alpha$'s for which $\{\widetilde{T}_t^{\alpha,\varphi,j}\}$ 
is well defined on all $L^p(\R_+)$, $1\le p\le\infty$, and maps $L^p(\R_+)$ into itself, is 
$$
\mathcal{A}_j^{\varphi} =
\big\{\alpha : \alpha_j\ge-3/2, \alpha_i \ge -1/2\,\, {\rm for}\,\, i\neq j\big\}.
$$ 
Moreover, the above results concerning $\{T_t^{\alpha,\varphi}\}$ imply the following.
\begin{propo}
Let $j \in \{1,\ldots,d\}$ be fixed and let $\alpha \in \mathcal{A}_j^{\varphi}$. 
If $\alpha_j \in \{-3\slash 2\}\cup [-1\slash 2,\infty)$
and $\alpha_i \in \{-1\slash 2\} \cup [1\slash 2,\infty)$, $i\neq j$, then 
$\{\widetilde{T}_t^{\alpha,\varphi,j}\}$ is a submarkovian (but not Markovian)
symmetric diffusion semigroup satisfying, for $1\le p \le \infty$,
$$
\|\widetilde{T}_t^{\alpha,\varphi,j}\|_{p \to p} \le e^{-2t} (\cosh 2t)^{-d\slash 2}, \qquad t >0.
$$
If either $\alpha_j \notin \{-3\slash 2\}\cup [-1\slash 2,\infty)$ or 
$\alpha_i \notin \{-1\slash 2\} \cup [1\slash 2,\infty)$ for some $i\neq j$, then
$\{\widetilde{T}_t^{\alpha,\varphi,j}\}$ is not an $L^p$-contraction semigroup,
but there exists a constant $c=c(\alpha)>1$ such that
the above estimate holds with the right-hand side multiplied by $c$.
\end{propo}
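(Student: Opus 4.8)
The entire statement reduces to the already-proved Theorem \ref{th:contr-phi} via the identity $\widetilde{T}_t^{\alpha,\varphi,j} = e^{-2t}\, T_t^{\alpha+e_j,\varphi}$ recorded just above, once we track how the parameter conditions transform under the coordinate shift $\alpha \mapsto \alpha+e_j$. Since $(\alpha+e_j)_j = \alpha_j+1$ and $(\alpha+e_j)_i = \alpha_i$ for $i \neq j$, the membership $\alpha+e_j \in (\{-1\slash 2\}\cup[1\slash 2,\infty))^d$ is equivalent to $\alpha_j+1 \in \{-1\slash 2\}\cup[1\slash 2,\infty)$ together with $\alpha_i \in \{-1\slash 2\}\cup[1\slash 2,\infty)$ for $i \neq j$; that is, to $\alpha_j \in \{-3\slash 2\}\cup[-1\slash 2,\infty)$ and $\alpha_i \in \{-1\slash 2\}\cup[1\slash 2,\infty)$ for $i\neq j$. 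Thus the dichotomy of the statement corresponds precisely to whether or not $\alpha+e_j$ lies in the good parameter set of Theorem \ref{th:contr-phi}.

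For the positive part I would assume these conditions, so that Theorem \ref{th:contr-phi} applies to $\{T_t^{\alpha+e_j,\varphi}\}$ and gives $\|T_t^{\alpha+e_j,\varphi}\|_{p\to p}\le(\cosh 2t)^{-d\slash 2}$; multiplying by $e^{-2t}$ yields at once the asserted bound $\|\widetilde{T}_t^{\alpha,\varphi,j}\|_{p\to p}\le e^{-2t}(\cosh 2t)^{-d\slash 2}$. The semigroup property and Conditions (ii)--(iv) are inherited verbatim from $\{T_t^{\alpha+e_j,\varphi}\}$, the positive scalar factor $e^{-2t}$ affecting neither self-adjointness, positivity, nor $L^2$-continuity at $t=0^+$; Condition (i) is the bound just obtained, and the submarkovian Condition (v') follows from positivity together with $\widetilde{T}_t^{\alpha,\varphi,j}\textbf{1}\le e^{-2t}(\cosh 2t)^{-d\slash 2}\,\textbf{1}<\textbf{1}$ for $t>0$, the strict inequality showing simultaneously that the semigroup fails to be Markovian.

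For the negative part, the hypothesis that either $\alpha_j\notin\{-3\slash 2\}\cup[-1\slash 2,\infty)$ or $\alpha_i\notin\{-1\slash 2\}\cup[1\slash 2,\infty)$ for some $i\neq j$ means, since $\alpha\in\mathcal A_j^\varphi$, that some coordinate of $\alpha+e_j$ falls in the open interval $(-1\slash 2,1\slash 2)$. I would then repeat the construction from the proof of Theorem \ref{th:contr-phi}: writing $\widetilde{T}_t^{\alpha,\varphi,j}\textbf{1}(x) = e^{-2t}\prod_{i=1}^d T_t^{(\alpha+e_j)_i,\varphi}\textbf{1}(x_i)$, I pick a bad coordinate $i_0$ and set $x_{i_0}=c\sqrt{\sinh 4t}$ so that, by Lemma \ref{lem:H}(b), its factor is pinned at a constant value $1+2\varepsilon>1$ independent of $t$, while for the remaining coordinates I take $x_i=1$, making each factor tend to $1$ as $t\to0^+$ (the $H$-function has limit $1$ at $+\infty$). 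The decisive point is that the damping factors now in play --- the prefactors $(\cosh 2t)^{-1\slash 2}$, the Gaussians $\exp(-\tfrac12\tanh(2t)\,x_i^2)$ at this choice of $x$, and the extra $e^{-2t}$ --- all tend to $1$ as $t\to0^+$, whereas the overshoot $1+2\varepsilon$ is a fixed constant; hence $\widetilde{T}_t^{\alpha,\varphi,j}\textbf{1}(x)\to1+2\varepsilon>1$, so that $\|\widetilde{T}_t^{\alpha,\varphi,j}\|_{\infty\to\infty}=\|\widetilde{T}_t^{\alpha,\varphi,j}\textbf{1}\|_\infty>1$ for all small $t>0$ and $L^p$-contractivity fails. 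The accompanying bound with $c=c(\alpha)>1$ comes from the finiteness in Lemma \ref{lem:H} of each $\|H_{\frac{(\alpha+e_j)_i}2+\frac34,(\alpha+e_j)_i+1}\|_\infty$ (equal to $1$ for the good coordinates, and in $(1,\infty)$ for the bad ones): taking $c$ to be the product of these sup-norms gives $\|\widetilde{T}_t^{\alpha,\varphi,j}\textbf{1}\|_\infty\le c\,e^{-2t}(\cosh 2t)^{-d\slash 2}$, which Lemma \ref{Schur} promotes to the operator-norm estimate on every $L^p$.

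The main obstacle is the negative part: one must make sure that the extra damping factor $e^{-2t}$, absent from the original semigroup of Theorem \ref{th:contr-phi}, does not restore contractivity. This is exactly what the bookkeeping above settles --- the excess of $\widetilde{T}_t^{\alpha,\varphi,j}\textbf{1}$ over $\textbf{1}$ is governed by a constant $1+2\varepsilon>1$ that survives the limit $t\to0^+$, a regime in which every exponential decay factor, $e^{-2t}$ included, is negligibly close to $1$.
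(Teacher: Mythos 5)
Your proposal is correct and is essentially the paper's own argument: the paper proves this proposition precisely by the identity $\widetilde{T}_t^{\alpha,\varphi,j} = e^{-2t}\, T_t^{\alpha+e_j,\varphi}$ and an appeal to Theorem \ref{th:contr-phi}, with exactly your parameter bookkeeping under the shift $\alpha \mapsto \alpha + e_j$. Your extra verification that the damping factor $e^{-2t}$ cannot restore contractivity in the negative case --- because the overshoot $1+2\varepsilon$ from Lemma \ref{lem:H}(b) is independent of $t$ while all damping factors tend to $1$ as $t \to 0^+$ --- is a detail the paper leaves implicit, and you supply it correctly.
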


\section{ Laguerre function semigroup of convolution type} \label{sec:lag_conv}

In this case the differential operator in question is
$$
L_{\alpha}^{\ell}=
-\Delta +\|x\|^2 - \sum _{i=1}^{d} \frac{2\alpha_i+1}{x_i} \frac{\partial}{\partial x_i}.
$$
It is formally symmetric and positive in $L^2(\R_{+},\,\mu_\alpha)$ and we have 
$L_\alpha^{\ell}\ell_k^{\alpha}=(4|k|+2|\alpha|+2d)\ell_k^{\alpha}$.
The operator $L_\alpha^\ell$ has a natural self-adjoint extension (denoted by the same symbol) 
whose spectral decomposition is given by the $\ell_k^{\alpha}$.

The integral representation of the heat semigroup $\{T_t^{\alpha,\ell}\}=\{\exp({-tL_\alpha^{\ell}})\}$ is
\begin{equation} \label{int_ell}
 T_t^{\alpha,\ell}  f(x)=\int_{\R_{+}} G_t^{\alpha,\ell}(x,y)f(y)\,d\mu_{\alpha}(y),
  \qquad x\in \R_{+},
\end{equation}
where the heat kernel is
\begin{align*}
G^{\alpha,\ell}_t(x,y)&=\sum_{n=0}^\infty e^{-t(4n+2|\alpha|+2d)}
 \sum_{|k|=n}\ell_k^\alpha(x)\ell_k^\alpha(y)\\
&= 2^d e^{-2t(|\alpha|+d)} e^{-(\|x\|^2+\|y\|^2)\slash 2}	G_{4t}^{\alpha,{\lag}}(x^2,y^2)\\
&=(\sinh 2t)^{-d}\exp\Big({-\frac{1}{2} \coth(2t)\big(\|x\|^{2}+\|y\|^{2}\big)}\Big)
\prod^{d}_{i=1} (x_i y_i)^{-\alpha_i} I_{\alpha_i}\left(\frac{x_i y_i}{\sinh 2t}\right).
\end{align*}
For all $\alpha \in (-1,\infty)^d$, the action of $T_t^{\alpha,\ell}$ may be extended
by \eqref{int_ell} to $L^1(\R_+, \,\mu_\alpha)+L^{\infty}(\R_+, \,\mu_\alpha)$. Indeed,
the asymptotics \eqref{bes} easily imply  the integral in \eqref{int_ell} to be convergent for every 
$f\in L^p(\R_+,\,\mu_\alpha)$, $1\le p \le \infty$, and every $x \in \R_+$.

The following result is essentially contained in \cite[Theorem 2.3]{Ste}.
Here we give an independent proof which is based on Lemma \ref{prud}. Note that for
$\alpha=(-1/2,\ldots,-1/2)$, the conclusions of Theorem \ref{th:contr-ell} and Theorem \ref{th:contr-phi}
coincide, as it should be. 
\begin{theor} \label{th:contr-ell}
Let $\alpha\in(-1,\infty)^d$. Then $\{T_t^{\alpha,\ell}\}$ is a submarkovian (but not Markovian) 
symmetric diffusion semigroup satisfying, for $1\le p\le\infty$,
\begin{equation*} 
\|T_t^{\alpha,\ell}\|_{p\to p}\le  (\cosh 2t) ^{-(|\alpha|+d)},\qquad t>0.
\end{equation*}
\end{theor}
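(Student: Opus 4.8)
The plan is to follow the same scheme that succeeded for Theorems \ref{th:contr-cal} and \ref{th:contr-phi}: reduce to dimension one using the tensor product structure of the kernel, compute $T_t^{\alpha,\ell}\textbf{1}$ explicitly by means of Lemma \ref{prud}, bound its supremum, and finally invoke Lemma \ref{Schur}. The submarkovian (but not Markovian) assertion will then drop out once we exhibit the strict inequality $T_t^{\alpha,\ell}\textbf{1}<\textbf{1}$, since Conditions (i)--(iv) have already been granted in the introduction and (v'), not (v), is what we shall verify.

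First I would work in one dimension and, using \eqref{int_ell} with $d=1$, write
$$
T_t^{\alpha,\ell}\textbf{1}(x) = (\sinh 2t)^{-1} x^{-\alpha} e^{-\frac{1}{2}\coth(2t)\,x^2}
\int_0^{\infty} y^{\alpha+1} e^{-\frac{1}{2}\coth(2t)\,y^2}
I_{\alpha}\Big(\frac{xy}{\sinh 2t}\Big)\, dy.
$$
Applying Lemma \ref{prud} with $\beta=\alpha+2$, $\nu=\alpha$, $p=\frac12\coth 2t$ and $q=x/\sinh 2t$, the hypotheses $\beta+\nu=2(\alpha+1)>0$ and $\nu=\alpha>-1$ are satisfied for every $\alpha\in(-1,\infty)$. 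The decisive feature here, in contrast to the two previous cases, is that $\frac{\beta+\nu}{2}=\alpha+1=\nu+1$, so the confluent hypergeometric factor collapses to ${_1F_1}(\alpha+1;\alpha+1;\cdot)=\exp(\cdot)$; equivalently, in the notation preceding Lemma \ref{lem:H}, the relevant function is $H_{\alpha+1,\alpha+1}\equiv\textbf{1}$, and no appeal to Lemma \ref{lem:H} is needed at all.

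Carrying out the elementary simplification, the prefactors collapse via $\sinh 2t\cdot\coth 2t=\cosh 2t$ to $(\cosh 2t)^{-(\alpha+1)}$, while the two quadratic exponents combine through the identity $-\frac12\coth 2t+(\sinh 4t)^{-1}=-\frac12\tanh 2t$, yielding
$$
T_t^{\alpha,\ell}\textbf{1}(x) = (\cosh 2t)^{-(\alpha+1)}\exp\Big(-\frac{x^2}{2}\tanh 2t\Big).
$$
Since the exponential attains its maximum at $x=0$, this gives $\|T_t^{\alpha,\ell}\textbf{1}\|_{\infty}=(\cosh 2t)^{-(\alpha+1)}$. Tensoring back over the coordinates, for $\alpha\in(-1,\infty)^d$ we obtain
$$
T_t^{\alpha,\ell}\textbf{1}(x) = (\cosh 2t)^{-(|\alpha|+d)}\exp\Big(-\frac{\|x\|^2}{2}\tanh 2t\Big),
$$
and hence $\|T_t^{\alpha,\ell}\textbf{1}\|_{\infty}=(\cosh 2t)^{-(|\alpha|+d)}<1$ for every $t>0$.

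The kernel $G_t^{\alpha,\ell}$ is nonnegative and symmetric, and the previous display identifies $\int_{\R_+} G_t^{\alpha,\ell}(x,y)\,d\mu_\alpha(y)=T_t^{\alpha,\ell}\textbf{1}(x)\le(\cosh 2t)^{-(|\alpha|+d)}$, so Lemma \ref{Schur} immediately delivers the claimed operator bound for all $1\le p\le\infty$. The strict inequality $T_t^{\alpha,\ell}\textbf{1}<\textbf{1}$ verifies (v') while ruling out (v), so the semigroup is submarkovian and not Markovian. I expect no genuine obstacle in this case: the only points requiring care are the hyperbolic bookkeeping in the exponent and recognizing that the two parameters of ${_1F_1}$ coincide, so that the hypergeometric factor degenerates to an exponential --- this degeneracy is precisely what makes the convolution-type semigroup the cleanest of the four.
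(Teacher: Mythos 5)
Your proposal is correct and follows essentially the same route as the paper: a one-dimensional computation of $T_t^{\alpha,\ell}\textbf{1}$ via Lemma \ref{prud} (where the parameter coincidence $\frac{\beta+\nu}{2}=\nu+1$ makes the ${_1F_1}$ factor degenerate to an exponential, i.e. $H_{\alpha+1,\alpha+1}\equiv\textbf{1}$), followed by Lemma \ref{Schur} and tensorization. The paper's proof is just a terser version of exactly this argument, so your write-up merely supplies the intermediate hyperbolic bookkeeping the paper omits.
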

\begin{proof}
In the one-dimensional case Lemma \ref{prud} shows that
\begin{equation*}
T^{\alpha,\ell}_{t}\textbf{1}(x)= \frac1{(\cosh2t)^{\alpha+1}} \exp\Big(-\frac{1}2 x^2 \tanh 2t\Big).
\end{equation*}
Then the desired estimate is a consequence of Lemma \ref{Schur}.

The multi-dimensional result follows by the tensor product structure of the semigroup.
\end{proof}

The modified semigroups $\{\widetilde{T}_t^{\alpha,\ell,j}\}$, $j=1,\ldots,d$, are generated in
$L^2(\R_+,\,\mu_\alpha)$ by suitable self-adjoint extensions of the operators
$L_{\alpha}^{\ell}+\frac{2\alpha_j+1}{x_j^2}+2$, see \cite[Section 4]{NS2}. Their integral representation is
$$
\widetilde{T}_t^{\alpha,\ell,j}f(x) 
	= e^{-2t}\int_{\R_+} x_jy_jG_t^{\alpha+e_j,\ell}(x,y)f(y)\, d\mu_\alpha(y),
	\qquad x \in \R_+, \quad t>0.
$$
Note that the sharp range of $\alpha$'s for which $\{\widetilde{T}_t^{\alpha,\ell,j}\}$ 
is well defined on all $L^p(\R_+,\,\mu_\alpha)$, $1\le p\le\infty$, and maps $L^p(\R_+,\,\mu_\alpha)$ 
into itself, is 
$$
\mathcal{A}_j^{\ell} =
\big\{\alpha : \alpha_j>-3/2, \alpha_i>-1\,\, {\rm for}\,\, i\neq j\big\}.
$$ 

\begin{propo}
Let $j \in \{1,\ldots,d\}$ be fixed and let $\alpha \in \mathcal{A}_j^{\ell}$. 
If $\alpha_j\ge -1\slash 2$, then $\{\widetilde{T}_t^{\alpha,\ell,j}\}$ 
is a submarkovian (but not Markovian) symmetric diffusion semigroup satisfying, for $1\le p \le \infty$,
$$
\|\widetilde{T}_t^{\alpha,\ell,j}\|_{p \to p} \le e^{-2t} (\cosh 2t)^{-(|\alpha|+d)}, \qquad t >0.
$$
If  $\alpha_j \in (-3\slash 2,-1\slash 2)$, then $\{\widetilde{T}_t^{\alpha,\ell,j}\}$ is not an
$L^p$-contraction semigroup, but there exists a constant $c=c(\alpha)>1$ such that the above estimate 
holds with the right-hand side multiplied by $c$.
\end{propo}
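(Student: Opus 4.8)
The plan is to follow the scheme already used for Theorem~\ref{th:contr-ell} and the preceding propositions: reduce everything to an explicit evaluation of $\widetilde{T}_t^{\alpha,\ell,j}\textbf{1}$, recognize the outcome as the auxiliary function $H_{\eta,\gamma}$ times elementary factors, and then feed the result into Lemma~\ref{lem:H} and Lemma~\ref{Schur}. Since the kernel of $\widetilde{T}_t^{\alpha,\ell,j}$ against $d\mu_\alpha$ equals $e^{-2t}x_jy_j\,G_t^{\alpha+e_j,\ell}(x,y)$, and both $x_jy_j$ and $G_t^{\alpha+e_j,\ell}$ are symmetric in $x,y$, the kernel is nonnegative and symmetric, so Lemma~\ref{Schur} applies and it suffices to control $\sup_{x}\widetilde{T}_t^{\alpha,\ell,j}\textbf{1}(x)=\|\widetilde{T}_t^{\alpha,\ell,j}\textbf{1}\|_\infty$.

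First I would compute $\widetilde{T}_t^{\alpha,\ell,j}\textbf{1}$ using the tensor product structure. For each coordinate $i\neq j$ the relevant integral is exactly the one evaluated in the proof of Theorem~\ref{th:contr-ell}, producing the factor $(\cosh 2t)^{-(\alpha_i+1)}\exp(-\tfrac12 x_i^2\tanh 2t)$. For the $j$th coordinate the extra factor $x_jy_j$ and the shift $\alpha_j\mapsto\alpha_j+1$ in the Bessel order raise the exponent of $y_j$ to $\alpha_j+1$, and after this adjustment Lemma~\ref{prud} applied with $\beta=\alpha_j+2$, $\nu=\alpha_j+1$, $p=\tfrac12\coth 2t$, $q=x_j/\sinh 2t$ (so that $q^2/4p=x_j^2/\sinh 4t$) evaluates the $j$th integral in terms of ${_1F_1}(\alpha_j+\tfrac32;\alpha_j+2;x_j^2/\sinh 4t)$.

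The main computational step is then to collect the elementary factors and rewrite the result through $H_{\alpha_j+3/2,\alpha_j+2}$. Using $\sinh 4t=2\sinh 2t\cosh 2t$ together with the cancellation $-\tfrac12\coth 2t+1/\sinh 4t=-\tfrac12\tanh 2t$ (which combines the Gaussian coming from the kernel with the factor $e^{-u}$ built into the definition of $H_{\eta,\gamma}$), I expect all powers of $\sinh 2t$ and of $x_j$ to cancel and the computation to collapse to
$$
\widetilde{T}_t^{\alpha,\ell,j}\textbf{1}(x)=e^{-2t}(\cosh 2t)^{-(|\alpha|+d)}\exp\Big(-\tfrac12\|x\|^2\tanh 2t\Big)\,H_{\alpha_j+3/2,\alpha_j+2}\Big(\tfrac{x_j^2}{\sinh 4t}\Big).
$$
This bookkeeping — in particular verifying the exact cancellation that yields the clean prefactor $e^{-2t}(\cosh 2t)^{-(|\alpha|+d)}$ — is the part most prone to slips and is where I would concentrate the care; everything else is a routine application of the earlier lemmas.

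With this identity in hand the two regimes follow from Lemma~\ref{lem:H} applied with $\eta=\alpha_j+3/2$ and $\gamma=\alpha_j+2$; note that $\eta>0$ on $\mathcal{A}_j^{\ell}$ and $\gamma-\eta=1/2>0$, so $\eta\neq\gamma$. When $\alpha_j\ge-1/2$ we have $\eta\ge1$, hence $\|H_{\alpha_j+3/2,\alpha_j+2}\|_\infty=1$; since the Gaussian factor is $\le1$, this gives $\|\widetilde{T}_t^{\alpha,\ell,j}\textbf{1}\|_\infty\le e^{-2t}(\cosh 2t)^{-(|\alpha|+d)}<1$ for $t>0$, and Lemma~\ref{Schur} yields the stated bound on all $L^p$. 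The strict inequality shows the semigroup satisfies (v$'$) but not (v), so it is submarkovian but not Markovian, while (ii)--(iv) are immediate from self-adjointness and positivity of the kernel. When $\alpha_j\in(-3/2,-1/2)$ we have $\eta\in(0,1)$, so by Lemma~\ref{lem:H}(b) $\|H_{\alpha_j+3/2,\alpha_j+2}\|_\infty=:c(\alpha)\in(1,\infty)$; bounding the Gaussian factor by $1$ and invoking Lemma~\ref{Schur} gives the estimate with the right-hand side multiplied by $c(\alpha)$. Finally, to see that contractivity fails I would use Lemma~\ref{lem:H}(b) to fix $c_0$ with $c_0^2\ge u_0$, so that $H_{\alpha_j+3/2,\alpha_j+2}(c_0^2)>1$; taking $x_j=c_0\sqrt{\sinh 4t}$ and keeping the remaining coordinates fixed, all the other factors tend to $1$ as $t\to0^+$, whence $\widetilde{T}_t^{\alpha,\ell,j}\textbf{1}(x)>1$ for small $t$. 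Since the kernel is positive, this sup equals $\|\widetilde{T}_t^{\alpha,\ell,j}\|_{\infty\to\infty}$, so $L^\infty$-contractivity — and hence $L^p$-contractivity for all $p$ — breaks down.
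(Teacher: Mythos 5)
Your proposal is correct and follows essentially the same route as the paper: the paper's proof consists precisely of computing, via the product structure and Lemma \ref{prud}, the identity
$\widetilde{T}_t^{\alpha,\ell,j}\textbf{1}(x)=e^{-2t}(\cosh 2t)^{-(|\alpha|+d)}\exp\big(-\tfrac12\|x\|^2\tanh 2t\big)\,H_{\alpha_j+3/2,\alpha_j+2}\big(x_j^2/\sinh 4t\big)$ — exactly the formula you derive — and then repeating the arguments of Theorem \ref{th:contr-phi}, i.e.\ the applications of Lemmas \ref{lem:H} and \ref{Schur} you spell out. Your write-up simply makes explicit the bookkeeping and the small-$t$ choice $x_j=c_0\sqrt{\sinh 4t}$ that the paper leaves to the reader.
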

\begin{proof}
By the product structure and Lemma \ref{prud} we get
$$
T^{\alpha_j,\ell,j}_{t}\textbf{1}(x) = e^{-2t} (\cosh2t)^{-(|\alpha|+d)} 
	\exp\Big(-\frac{1}2 \|x\|^2 \tanh 2t\Big) \;
	H_{\alpha_j+\frac{3}2,\alpha_j+2}\Big(\frac{x^2_j}{\sinh 4t} \Big).
$$
From here we repeat, with suitable adjustments, the arguments proving Theorem \ref{th:contr-phi}.
\end{proof}

\section{Comments on Bessel semigroups} \label{sec:comm}

The methods developed above for investigating $L^p$-contractivity of Laguerre semigroups are perfectly 
applicable in the context of continuous expansions connected with Bessel operators. 
It is remarkable that the results obtained in the Bessel context are closely related to those
for the Laguerre semigroups. In what follows, for the sake of simplicity, we consider the 
one-dimensional situation. It is straightforward to generalize the results to the multi-dimensional setting.
We also omit the discussion of the associated `modified' semigroups and leave details to interested readers.
For all facts concerning the Bessel setting
that are not properly explained below, the reader may consult \cite{BS} and \cite{BHNV}.

To begin with, consider for $\alpha>-1$ the Bessel differential operator
$$
L_\alpha^{\psi}=-\frac{d^2}{dx^2}+\frac1{x^2}\Big(\alpha^2-\frac14\Big), \qquad x>0,
$$
which is symmetric and positive in $L^2(\mathbb R_+)$. The functions
$$
\psi^{\alpha}_\lambda(x)=\sqrt{x\lambda}J_\alpha(x\lambda), \qquad \lambda>0,
$$
are eigenfunctions of $L_\alpha^{\psi}$,
$L_\alpha^{\psi}\psi^{\alpha}_\lambda=\lambda^2\psi^{\alpha}_\lambda$; here 
$J_\nu$ denotes the Bessel function of the first kind and order $\nu$, cf. \cite[Chapter 5]{Leb}. 
The operator $L_\alpha^{\psi}$ has a natural self-adjoint extension for which the spectral decomposition 
is given via the Hankel transform
$$
\mathcal H_\alpha f(\lambda)=\int_0^\infty \psi^{\alpha}_\lambda(x)f(x)\,dx, \qquad\lambda>0.
$$
Note that for $\alpha=-1\slash 2$ and $\alpha=1\slash 2$ we encounter here the cosine and sine
transforms, respectively.
We keep our convention to denote the extension by using the same symbol $L_\alpha^{\psi}$, 
even though there is an inconvenience: if $0<\alpha<1$, then $L_\alpha^{\psi}$
and $L_{-\alpha}^{\psi}$ coincide as differential operators, but their self-adjoint 
extensions are different; we do hope this will not cause any confusion.

It is well known that $\mathcal H_\alpha \circ \mathcal H_\alpha =\textrm{Id}$, 
and so $\mathcal H_\alpha $ is an isometry on $L^2(\mathbb R_+)$. 
The corresponding heat semigroup $\{T_t^{\alpha,\psi}\}=\{\exp(-tL_\alpha^{\psi})\}$, 
is defined by means of the spectral theorem by
$$
T_t^{\alpha,\psi}f= 
\mathcal H_\alpha\big(e^{-t(\cdot)^2}\mathcal H_\alpha f\big), \qquad f\in L^2(\mathbb R_+),
$$
and it has the integral representation
\begin{equation} \label{int_bess}
 T_t^{\alpha,\psi}  f(x)=\int_0^\infty G_t^{\alpha,\psi}(x,y)f(y)\,dy,
  \qquad x\in \R_{+},
\end{equation}
where the heat kernel is given by
$$
G^{\alpha,\psi}_t(x,y)=\int_{0}^\infty e^{-t\lambda^2} 
	\psi^{\alpha}_\lambda(x)\psi^{\alpha}_\lambda(y)\,d\lambda
=\frac1{2t}\sqrt{xy}\exp\bigg({-\frac{x^2+y^2}{4t}}\bigg) I_\alpha\Big(\frac{xy}{2t}\Big).
$$
Notice that this kernel is symmetric and strictly positive. Note also that for 
$\alpha \ge -1\slash 2$, the action of $T_t^{\alpha,\psi}$ can be extended
by \eqref{int_bess} to $L^1(\mathbb{R}_+)+L^{\infty}(\mathbb{R}_+)$, 
and again this can be easily verified by means of the asymptotics \eqref{bes}. 
If $\alpha<-1/2$, then remarks similar to those preceding the statement of Theorem \ref{th:contr-phi} 
are in order. In particular,  the question of $L^p$-contractivity of $\{T_t^{\alpha,\psi}\}$ 
makes sense only for $\alpha \in [-1\slash 2,\infty)$.

\begin{propo} \label{propo:contr-bess1}
Let $\alpha\in[-1/2,\infty)$. If $\alpha \in \{-1\slash 2\}\cup [1\slash 2,\infty)$, 
then $\{T_t^{\alpha,\psi}\}$ is a submarkovian (Markovian if and only if $\alpha=-1\slash 2$) 
symmetric diffusion semigroup. If $\alpha\in (-1\slash 2,1\slash 2)$, then 
$\{T_t^{\alpha,\psi}\}$ is not an $L^p$-contractive semigroup, but there exists a constant 
$c=c(\alpha)>1$ such that, for all $1\le p \le \infty$, 
$\|T_t^{\alpha,\psi}\|_{p \to p}\le c$, $t>0$.
\end{propo}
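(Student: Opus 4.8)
The plan is to follow the template already used in the proofs of Theorems \ref{th:contr-phi} and \ref{th:contr-ell}: compute $T_t^{\alpha,\psi}\textbf{1}$ in closed form, recognize it as a value of the auxiliary function $H_{\eta,\gamma}$, and then read off all the conclusions from Lemmas \ref{lem:H} and \ref{Schur}. Conditions (ii)--(iv) are immediate (the kernel in \eqref{int_bess} is symmetric and strictly positive, and the construction is spectral via the Hankel transform), so the entire content lies in deciding (i), (v$'$) and (v). Integrating $G_t^{\alpha,\psi}(x,y)$ against $\textbf{1}$ and pulling out the $x$-dependent factors gives
$$
T_t^{\alpha,\psi}\textbf{1}(x) = \frac{\sqrt{x}}{2t}\exp\Big(-\frac{x^2}{4t}\Big)\int_0^\infty \sqrt{y}\,\exp\Big(-\frac{y^2}{4t}\Big)\,I_\alpha\Big(\frac{xy}{2t}\Big)\,dy,
$$
and Lemma \ref{prud}, applied with $\beta=3\slash 2$, $p=1\slash(4t)$, $q=x\slash(2t)$ and $\nu=\alpha$ (whose hypotheses $\beta+\nu=\alpha+3\slash2>0$ and $\nu=\alpha\ge -1\slash2$ hold throughout the admissible range), evaluates the integral in terms of ${_1F_1}\big(\tfrac{\alpha}2+\tfrac34;\alpha+1;\tfrac{x^2}{4t}\big)$.

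The key, and pleasantly clean, feature is that upon inserting the definition of $H_{\eta,\gamma}$ with $\eta=\tfrac{\alpha}2+\tfrac34$ and $\gamma=\alpha+1$, all the $t$-dependent prefactors cancel \emph{exactly}, leaving
$$
T_t^{\alpha,\psi}\textbf{1}(x) = H_{\frac{\alpha}2+\frac34,\,\alpha+1}\Big(\frac{x^2}{4t}\Big).
$$
These are precisely the parameters from the Hermite-type case of Theorem \ref{th:contr-phi}, which explains why the admissible ranges of $\alpha$ coincide. Unlike the Laguerre semigroups there is now no decaying $\cosh$ prefactor; moreover, since $T_t^{\alpha,\psi}\textbf{1}(x)$ depends on $(x,t)$ only through the scaling combination $x^2\slash(4t)$, the quantity $\|T_t^{\alpha,\psi}\textbf{1}\|_{\infty}=\|H_{\frac{\alpha}2+\frac34,\,\alpha+1}\|_{\infty}$ is independent of $t$.

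It remains to translate the elementary relations governing Lemma \ref{lem:H} into conditions on $\alpha$: one checks that $\eta=\gamma\Leftrightarrow\alpha=-1\slash2$, that $\eta\ge1\Leftrightarrow\alpha\ge 1\slash2$, and that the remaining regime $\eta\neq\gamma$, $\eta<1$ corresponds to $\alpha\in(-1\slash2,1\slash2)$. For $\alpha\in\{-1\slash2\}\cup[1\slash2,\infty)$, Lemma \ref{lem:H}(a) yields $\|T_t^{\alpha,\psi}\textbf{1}\|_{\infty}=1$, hence $T_t^{\alpha,\psi}\textbf{1}\le\textbf{1}$, and Lemma \ref{Schur} gives $\|T_t^{\alpha,\psi}\|_{p\to p}\le 1$ for every $p$; thus the semigroup is submarkovian, and it is conservative (hence Markovian) exactly when $H\equiv\textbf{1}$, i.e.\ when $\eta=\gamma$, that is $\alpha=-1\slash2$ (for which the kernel collapses to the even Gaussian, the Neumann heat kernel on $\mathbb R_+$). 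For $\alpha\in(-1\slash2,1\slash2)$, Lemma \ref{lem:H}(b) gives $\|H\|_{\infty}>1$, so $\|T_t^{\alpha,\psi}\|_{\infty\to\infty}=\|H\|_{\infty}>1$ and $L^{\infty}$-contractivity fails; nevertheless, by the $t$-independence noted above together with Lemma \ref{Schur}, the uniform bound $\|T_t^{\alpha,\psi}\|_{p\to p}\le c:=\|H_{\frac{\alpha}2+\frac34,\,\alpha+1}\|_{\infty}$ holds for all $p$ and all $t>0$.

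No serious obstacle arises. The only points needing a little care are the bookkeeping of constants in the cancellation that produces the clean identity for $T_t^{\alpha,\psi}\textbf{1}$, and the standard fact that for a nonnegative integral operator one has $\|T_t^{\alpha,\psi}\|_{\infty\to\infty}=\|T_t^{\alpha,\psi}\textbf{1}\|_{\infty}$; since $\textbf{1}\notin L^2(\mathbb R_+)$, this last identity is made rigorous by testing against truncations $\textbf{1}_{[0,n]}\in L^2\cap L^{\infty}$ and letting $n\to\infty$, which legitimizes reading off non-contractivity directly from the size of $T_t^{\alpha,\psi}\textbf{1}$.
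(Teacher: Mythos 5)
Your proposal is correct and follows essentially the same route as the paper: compute $T_t^{\alpha,\psi}\textbf{1}(x)=H_{\frac{\alpha}2+\frac34,\alpha+1}\big(\tfrac{x^2}{4t}\big)$ via Lemma \ref{prud}, then read off submarkovianity, the Markovian case $\alpha=-1/2$, and the failure of contractivity for $\alpha\in(-1/2,1/2)$ from Lemmas \ref{lem:H} and \ref{Schur}. Your additional observations (the $t$-independence of $\|T_t^{\alpha,\psi}\textbf{1}\|_\infty$ by scaling, and the truncation argument legitimizing $\|T_t^{\alpha,\psi}\|_{\infty\to\infty}=\|T_t^{\alpha,\psi}\textbf{1}\|_\infty$ since $\textbf{1}\notin L^2$) merely spell out details the paper leaves implicit.
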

\begin{proof}
A direct computation based on Lemma \ref{prud} shows that
$$
T_t^{\alpha,\psi}\textbf{1}(x)=H_{\frac{\alpha}2+\frac{3}4,\alpha+1}\Big(\frac{x^2}{4t} \Big).
$$
The conclusion then follows by Lemmas \ref{lem:H} and \ref{Schur}.
The identity $T_t^{\alpha,\psi}\textbf{1}=\textbf{1}$ holds if and only if $\alpha=-1\slash 2$;
see the proof of Lemma \ref{lem:H} for the relevant property of the function $H_{\eta,\gamma}$.
\end{proof}

Another Bessel operator considered in the literature is 
$$
L_\alpha^{\Psi}=-\frac{d^2}{dx^2}-\frac{2\alpha+1}{x}\frac d{dx}, \qquad x>0,
$$
which is symmetric and positive in $L^2(\mathbb R_+,\,\mu_\alpha)$, $\alpha>-1$. The functions
$$
\Psi^{\alpha}_\lambda(x)=(x\lambda)^{-\alpha}{J_\alpha(x\lambda)}, \qquad \lambda>0,
$$
are eigenfunctions of $L_\alpha^{\Psi}$, 
$L_\alpha^{\Psi}\Psi^{\alpha}_\lambda=\lambda^2\Psi^{\alpha}_\lambda$. 
The operator $L^{\Psi}_{\alpha}$ has a natural self-adjoint extension
(denoted by the same symbol) for which the spectral 
decomposition is given via the modified Hankel transform
$$
  h_\alpha f(\lambda)=\int_0^\infty \Psi^{\alpha}_\lambda(x)f(x)\,d\mu_\alpha(x), \qquad\lambda>0.
$$
Again, it is known that $h_\alpha \circ h_\alpha =\textrm{Id}$, 
and $h_\alpha$ is an isometry on $L^2(\mathbb R_+,\,\mu_\alpha)$. The corresponding heat semigroup
$\{T_t^{\alpha,\Psi}\}=\{\exp(-t L_\alpha^{\Psi})\}$, is defined by means of the spectral theorem by
$$
T_t^{\alpha,\Psi}f = h_\alpha\big(e^{-t(\cdot)^2}  h_\alpha f\big), 
	\qquad f\in L^2(\mathbb R_+,\,\mu_\alpha),
$$
and has the integral representation
\begin{equation} \label{int_bess2}
 T_t^{\alpha,\Psi}  f(x)=\int_0^\infty G_t^{\alpha,\Psi}(x,y)f(y)\,d\mu_{\alpha}(y),
  \qquad x\in \mathbb{R}_{+},
\end{equation}
where the heat kernel is
$$
G^{\alpha,\Psi}_t(x,y)= (xy)^{-(\alpha+1/2)}G^{\alpha, \psi}_t(x,y)
=\frac1{2t}(xy)^{-\alpha}\exp\bigg({-\frac{x^2+y^2}{4t}}\bigg) I_\alpha\Big(\frac{xy}{2t}\Big).
$$
When $\alpha \in (-1,\infty)$, the action of $\{T_t^{\alpha,\Psi}\}$ may be extended
by \eqref{int_bess2} to $L^1(\R_+, \,\mu_\alpha)+L^{\infty}(\R_+, \,\mu_\alpha)$.

The result below for $\alpha \ge -1\slash 2$ is well known in the literature, since
for such $\alpha$ there exists a natural convolution structure for which the Bessel semigroup
is a convolution semigroup, and the convolution kernel is easily seen to have the relevant properties.
\begin{propo} \label{propo:contr-bess2}
Let $\alpha>-1$. Then $\{T_t^{\alpha,\Psi}\}$ is a Markovian symmetric diffusion semigroup. 
\end{propo}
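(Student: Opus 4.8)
The plan is to verify Conditions (i)--(v) directly for the kernel semigroup \eqref{int_bess2}, in the same spirit as the treatment of the Laguerre polynomial semigroup in Section \ref{sec:pol}. Conditions (ii), (iii) and the semigroup property come essentially for free from the spectral definition via the modified Hankel transform $h_\alpha$: since $h_\alpha$ is a self-adjoint involution and an isometry on $L^2(\mathbb{R}_+,\mu_\alpha)$, and $T_t^{\alpha,\Psi}$ is the conjugation of multiplication by the real multiplier $e^{-t\lambda^2}$ by $h_\alpha$, each $T_t^{\alpha,\Psi}$ is self-adjoint. As the multipliers $e^{-t\lambda^2}$ form a semigroup in $t$ and tend to $1$ boundedly as $t\to0^+$, $\{T_t^{\alpha,\Psi}\}$ is a semigroup on $L^2$ and $T_t^{\alpha,\Psi}f\to f$ in $L^2$. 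Positivity (iv) is equally immediate, because the kernel $G_t^{\alpha,\Psi}(x,y)$ is strictly positive on $\mathbb{R}_+\times\mathbb{R}_+$ (recall that $I_\alpha>0$ there for $\alpha>-1$).

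The heart of the matter is to check conservativeness (v), that is $T_t^{\alpha,\Psi}\textbf{1}=\textbf{1}$. Conceptually one expects this because $L_\alpha^{\Psi}\textbf{1}=0$ formally, so $\textbf{1}$ sits at the bottom of the spectrum; but as $\textbf{1}\notin L^2$, the clean rigorous route is to evaluate the kernel integral. First I would extract the $x$-dependent Gaussian and write
$$
T_t^{\alpha,\Psi}\textbf{1}(x)=\frac1{2t}\,x^{-\alpha}\exp\Big(-\frac{x^2}{4t}\Big)
\int_0^\infty y^{\alpha+1}\exp\Big(-\frac{y^2}{4t}\Big)I_\alpha\Big(\frac{xy}{2t}\Big)\,dy,
$$
and then evaluate the remaining integral by Lemma \ref{prud} with $\beta=\alpha+2$, $\nu=\alpha$, $p=1/(4t)$ and $q=x/(2t)$. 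The crucial point is that with these values $\tfrac{\beta+\nu}{2}=\alpha+1=\nu+1$, so the two parameters of the resulting confluent hypergeometric function coincide and it degenerates to an exponential, ${_1F_1}(\alpha+1;\alpha+1;x^2/4t)=e^{x^2/4t}$; equivalently, the relevant $H$-function is the trivial one $H_{\alpha+1,\alpha+1}\equiv\textbf{1}$. After collecting the powers of $2$ and $t$, this $e^{x^2/4t}$ cancels the Gaussian exactly and everything telescopes to $T_t^{\alpha,\Psi}\textbf{1}(x)=1$, with \emph{no} residual decaying prefactor. This is precisely what makes $\{T_t^{\alpha,\Psi}\}$ Markovian rather than merely submarkovian, in contrast with the semigroups studied earlier, where eigenvalue shifts produced factors such as $(\cosh(t/2))^{-1}$ or $e^{-t/2}$.

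With (v) established, contractivity (i) is then free: the kernel $G_t^{\alpha,\Psi}$ is nonnegative and symmetric with respect to $\mu_\alpha$ and integrates to $1$, so Lemma \ref{Schur} (the Schur test) with $B=1$ gives $\|T_t^{\alpha,\Psi}\|_{p\to p}\le1$ for every $1\le p\le\infty$. Collecting (i)--(v) yields that $\{T_t^{\alpha,\Psi}\}$ is a Markovian symmetric diffusion semigroup.

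I do not anticipate a serious obstacle; the only genuine content is the integral computation, whose ``hard part'' is merely recognizing the degeneration $\eta=\gamma$. The one point worth stressing is that the hypotheses of Lemma \ref{prud} require only $\nu=\alpha>-1$ and $\beta+\nu=2\alpha+2>0$, both equivalent to $\alpha>-1$; this is exactly why conservativeness, and hence the full Markovian property, persists over the entire range $\alpha>-1$, unlike the operator $L_\alpha^{\psi}$ of Proposition \ref{propo:contr-bess1}, where the analogous computation produces the nontrivial $H_{\frac{\alpha}2+\frac34,\alpha+1}$ and contractivity may fail.
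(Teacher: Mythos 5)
Your proposal is correct and follows essentially the same route as the paper: conservativeness $T_t^{\alpha,\Psi}\textbf{1}=\textbf{1}$ is obtained by applying Lemma \ref{prud} (with the degenerate case $\eta=\gamma$, i.e. $H_{\alpha+1,\alpha+1}\equiv\textbf{1}$, so the ${_1F_1}$ collapses to an exponential that cancels the Gaussian), and then $L^p$-contractivity for all $1\le p\le\infty$ follows from Lemma \ref{Schur}. Your verification of the remaining conditions (ii)--(iv) and of the hypotheses of Lemma \ref{prud} for all $\alpha>-1$ simply makes explicit what the paper leaves implicit.
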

\begin{proof}
This time a direct computation based on Lemma \ref{prud} gives
$T_t^{\alpha,\Psi}\textbf{1}=\textbf{1}$.
Again the conclusion follows by using Lemmas \ref{lem:H} and \ref{Schur}.
\end{proof}

As the reader probably noticed, there is a striking coincidence between the results stated above for 
the two Bessel semigroups and those for the Laguerre semigroups of Hermite and convolution types. 
Heuristically, this may be explained by a similarity of the differential operators: 
$L_\alpha^{\psi}$ and $L_\alpha^{\Psi}$ differ from $L_\alpha^{\varphi}$ and 
$L_\alpha^\ell$, respectively, only by the potential $x^2$ 
(recall that here we restrict ourselves to $d=1$). It is not surprising then, that
the heat kernels $G_t^{\alpha,\psi}$ and $G_t^{\alpha,\Psi}$ have the same 
shapes as $G_t^{\alpha,\varphi}$ and $G_t^{\alpha,\ell}$, respectively. In particular, the
behavior for $x$ and $y$ small is the same. 
This explains why the statements of Propositions \ref{propo:contr-bess1}
and \ref{propo:contr-bess2} are similar to those of Theorems \ref{th:contr-phi} and~\ref{th:contr-ell}.

Finally, it is worth of pointing out that one of the examples we discussed disproves
the following (fallacious) belief: if $L$ is a positive and formally symmetric in $L^2(X,\,\mu)$, 
$X\subset \mathbb R^d$, linear second order differential operator free of zero order term, and 
$\mathbb{L}$ is a self-adjoint extension of it, then $\exp(-t\mathbb{L})\textbf{1}=\textbf{1}$ 
(here we assume that  $\exp(-t\mathbb{L})$ extends onto $L^\infty$). Indeed, $-\frac{d^2}{dx^2}$ 
considered as $L^\psi_{1/2}$ is the example, since for the self-adjoint extension given spectrally 
by the sine transform, we have $T^{1/2,\psi}_t\textbf{1}=H_{1,3/2}((\cdot)^2/4t)\neq \textbf{1}$. 
Notice, however, that for the other self-adjoint extension of $-\frac{d^2}{dx^2}$ given spectrally 
by the cosine transform (when we consider $-\frac{d^2}{dx^2}$  as $L^\psi_{-1/2}$), 
we do have  $T^{-1/2,\psi}_t\textbf{1}=H_{1/2,1/2}((\cdot)^2/4t) = \textbf{1}$.


\end{document}